\newtheorem{theorem}{Theorem}
\newtheorem{definition}[theorem]{Definition}
\newtheorem{example}[theorem]{Example}
\newtheorem{lemma}[theorem]{Lemma}
\newtheorem{proposition}[theorem]{Proposition}
\newtheorem{remark}[theorem]{Remark}
\def\eps{\varepsilon}
\def\ZZ{\mathbb{Z}}
\def\NN{\mathbb{N}}
\def\RR{\mathbb{R}}
\def\PP{\mathbb{P}}
\def\EE{\mathbb{E}}
\def\F{\mathcal{F}}
\def\E{\mathcal{E}}
\def\N{\mathcal{N}}
\DeclareMathOperator{\Poisson}{Poisson}
\DeclareMathOperator{\Leb}{Leb}
\DeclareMathOperator{\Prob}{Prob}
\DeclareMathOperator*{\essinf}{ess\,inf}
\DeclareMathOperator*{\esssup}{ess\,sup}
\newcommand{\proofstep}[1]{%
  \par% ensure starting on a new paragraph
  \addvspace{\medskipamount}% some vertical space
  \textit{#1\@addpunct{.}}\enspace\ignorespaces
}
\newcommand*{\intf}{%
  \@ifnextchar_{\intf@sub}{%
    \@ifnextchar^{\intf@sup}{%
      \intf@{}{}%
    }%
  }%
}
\def\intf@sub_#1{%
  \@ifnextchar^{%
    \intf@sub@sup{#1}%
  }{%
    \intf@{#1}{}%
  }%
}
\def\intf@sup^#1{%
  \@ifnextchar_{%
    \intf@sup@sub{#1}%
  }{%
    \intf@{}{#1}%
  }%
}
\def\intf@sub@sup#1^#2{\intf@{#1}{#2}}
\def\intf@sup@sub#1_#2{\intf@{#2}{#1}}
\def\intf@#1#2#3d#4{%
  \int
  \ifx\\#1\\\else _{#1}\fi
  \ifx\\#2\\\else ^{#2}\fi
  \!#3\,\mathrm{d}#4%
}
\newcommand\restr[2]{{% we make the whole thing an ordinary symbol
  \left.\kern-\nulldelimiterspace % automatically resize the bar with \right
  #1 % the function
  \vphantom{\big|} % pretend it's a little taller at normal size
  \right|_{#2} % this is the delimiter
  }}
\newcommand{\abs}[1]{\left\lvert#1\right\rvert}
\newcommand{\norm}[1]{\left\lVert#1\right\rVert}
\begin{document}

\title{Quenched Poisson processes for random subshifts of finite type}
\author{Harry Crimmins}

\address{School of Mathematics and Statistics \\
University of New South Wales, Sydney NSW 2052,
Australia}
\email{harry.crimmins@unsw.edu.au}

\author{Beno{\^i}t Saussol}
\address{Univ Brest, UMR CNRS 6205, Laboratoire de Math\'ematiques de Bretagne Atlantique\\ and Aix Marseille Universit\'e, CNRS, Centrale Marseille, Institut de Math\'ematiques de Marseille, I2M - UMR 7373, 13453 Marseille, France}
\email{benoit.saussol@univ-amu.fr}

\date{\today}

\begin{abstract}
In this paper we study the quenched distributions of hitting times for a class of random dynamical systems.
We prove that hitting times to dynamically defined cylinders converge to a Poisson point process under the law of random equivariant measures with super-polynomial decay of correlations.
In particular, we apply our results to uniformly aperiodic random subshifts of finite type equipped with random invariant Gibbs measures.
We emphasize that we make no assumptions about the mixing property of the marginal measure.
\end{abstract}

\keywords{Random dynamical systems. Hitting times. Poisson law. Quenched limit theorem.}

\maketitle

\section{Introduction}

Let $(\Omega,\theta,\PP)$ be an invertible ergodic measure preserving transformation, and for each $\omega \in \Omega$ let $ X_\omega \subset X$ be a subset of a fixed compact metric space $X$, and let $f_\omega \colon X_\omega\to X_{\theta\omega}$ be a bi-measurable transformation.
Consider the associated random dynamical system described on the bundle $\E=\bigsqcup_{\omega \in \Omega} \{\omega\} \times X_\omega$ by the skew product $S\colon \E\to\E$ given by $S(\omega,x)=(\theta\omega,f_\omega(x))$.
Let $\nu$ be an $S$-invariant probability measure with marginal $\PP$ on $\Omega$, let $\{\mu_\omega\}_{\omega \in \Omega}$ denote its disintegration into sample measures on the fibers $\{X_\omega\}_{\omega \in \Omega}$, and let $\mu=\int_\Omega\mu_\omega d\PP$ denote the marginal of $\nu$ on $X$.
For each $k \in \ZZ^+$ we denote the random composition $f_{\theta^{k-1}\omega}\circ\cdots\circ f_\omega$ by $f_\omega^{k}$.

Our goal is to prove that the entrance times to a set $A \subset X$ are well approximated by a Poisson distribution in the limit of small $A$. For a deterministic, ergodic system $(X,f,\mu)$ the mean return time to a set $A$ is $\mu(A)^{-1}$ and one must consequently rescale time by a factor of $\mu(A)$ in order to obtain non-degenerate results.
For example it has been proven in many situations that the hitting time point process
\begin{equation*}
\N_A(x) := \sum_{k=1}^\infty 1_A(f^kx) \delta_{k\mu(A)}
\end{equation*}
is well approximated by a uniform Poisson point process, where $\delta_s$ is the Dirac mass at the time $s\ge0$. See \cite{P,Co,AG,C} and references therein for a broad overview, and especially \cite{bookEVT} where the link between hitting time statistics and extreme value theory is discussed.

For some random systems this deterministic scaling is still the proper one. This is the case, for instance, in \cite{RSV}, where it was shown that the first hitting times to cylinder sets about non-periodic points for random subshifts of finite type are approximated by an exponential distribution, provided that one has sufficiently fast mixing of the marginal measure $\mu$.
These results were extended in \cite{rousseau2015hitting,haydn2016return,AFV,FFV} to additionally characterise the distribution of the first and $n$th hitting times to cylinders about periodic points.
The goal of this paper is to obtain such results without mixing assumptions on the base map $\theta$, the $S$-invariant measure $\nu$ or the marginal $\mu$.
The key insight is that it is necessary to keep a \emph{random scaling} for the time. More precisely, let
\begin{equation}\label{eq:time}
T_\omega^k(A) = \sum_{i=1}^k\mu_{\theta^i\omega}(A).
\end{equation}
We observe that the expected value of $T_\omega^k(A)$ is equal to $k\mu(A)$, in accordance with the scaling of the deterministic case.
We then define the hitting time point process
\begin{equation}\label{eq:htpp}
\N_A(\omega,x) = \sum_{k=1}^\infty  1_{A}(f_\omega^{k}x) \delta_{T_\omega^k(A)},
\end{equation}
Under the law of $\mu_\omega$ this is a random point measure, and we will show that in some situations it converges to a uniform Poisson Point Process, $\omega$-a.s. (a so called quenched limit theorem), as $A$ shrinks to a point.

For smooth systems the existence of extreme value laws for sufficiently regular observables is closely related to statistical properties of hitting times \cite{FFT1}.
This connection has been recently explored for random dynamical systems in \cite{FFV}, however this approach does not cleanly adapt to the study of return times to cylinder sets for random subshifts of finite type (for an extended discussion on this topic see e.g. \cite[Section 5]{FFT2}).
We also mention \cite{FFMV} where point processes are used to study the extreme behaviour of random dynamical systems.
However, the approach of \cite{FFMV}, being built upon the setting of \cite{FFV}, suffers the same shortcoming as \cite{FFV} when characterizing the return time statistics of cylinder sets for random subshifts of finite type.
Moreover, the application to random dynamical systems in \cite{FFMV} requires that the marginal measure exhibit polynomial decay of correlations (specifically, in order to prove the limit \cite[(4.6)]{FFMV}).

An outline of the paper is as follows. In Section \ref{sec:prelim} we review the basic theory of point processes, including a simplified version of Kallenberg's Theorem, which is the main tool we will use to prove the convergence of hitting time point processes.
In Section \ref{sec:gal} we examine the behaviour of the hitting time point process for general random dynamical systems with a view towards verifying the hypotheses of Kallenberg's theorem.
Section \ref{sec:rsft} contains our main results: for random subshifts of finite type we develop tractable conditions that guarantee the a.s. convergence of the hitting time point process to a uniform Poisson point process without assuming any mixing properties of $\nu$, $\mu$, or $\theta$ (Theorem \ref{thm:main1}).
We then show that these conditions are satisfied when $\nu$ is a random invariant Gibbs measure (Theorem \ref{thm:gibbs}).
Lastly, in Section \ref{sec:examples} we detail a class of examples satisfying the requirements of Theorem \ref{thm:gibbs}.
Here we consider an example that could not be treated with earlier techniques: we consider a random subshift of finite type for which $(\Omega, \theta, \mathbb{P})$ is ergodic but not mixing, and where $\nu$ is a random invariant Gibbs measure such that the marginal $\mu$ is not mixing.

\section{Preliminaries on point processes}\label{sec:prelim}

In this section we review the basic theory of point processes that we require in later sections.
This material summarises the content of \cite[Chapter 3]{resnick} for the case of (temporal) point process on $[0,\infty)$.
A point measure on $[0, \infty)$ is any Radon measure $m$ on $[0, \infty)$ of the form
\begin{equation*}
	m = \sum_i \delta_{x_i},
\end{equation*}
where $\{x_i\} \subseteq [0, \infty)$ is a countable collection of (not necessarily distinct) points.
We denote the set of point measures on $[0, \infty)$ by $P([0, \infty))$ and endow $P([0, \infty))$ with the smallest sigma-algebra for which that map $m \in P([0, \infty)) \mapsto m(A)$ is measurable for every Borel set $A$.
A point process on $[0, \infty)$ is a $P([0, \infty))$-valued random variable.

We have already introduced a point process of relevance to (random) dynamical systems: the hitting time point processes defined in \eqref{eq:htpp}.
There is some flexibility in the definition here so let us be precise: for each $\omega \in \Omega$ we consider the point process $x \in X_\omega \mapsto \mathcal{N}_A(\omega, x) \in P([0, \infty))$.
Another important class of point processes are the Poisson point processes, which we now define.

\begin{definition}[Poisson point process]
	Suppose $\eta$ is a Radon measure on $[0, \infty)$.
	A point process $m$ on $[0, \infty)$ is called a Poisson point process (on $[0, \infty)$) with mean measure $\eta$ if
	\begin{enumerate}
		\item for every Borel set $A$ with $\eta(A) < \infty$ we have $m(A) \sim \Poisson(\eta(A))$;
		\item for every Borel set $A$ with $\eta(A) = \infty$ we have $m(A) = \infty$ a.s.; and
		\item if $\{A_i\}_{i=1}^k$ are disjoint Borel sets then $\{m(A_i)\}_{i=1}^k$ are independent random variables.
	\end{enumerate}
	Lastly, we call a Poisson point process on $[0,\infty)$ with Lebesgue mean measure a uniform Poisson point process (on $[0,\infty)$).
\end{definition}

Given a sequence of point measures $\{m_k\}_{k \in \ZZ^+} \subseteq P([0, \infty))$ and some $m \in P([0, \infty))$ we say that $\lim_{k \to \infty} m_k = m$ iff $\lim_{k \to \infty} m_k(B) = m(B)$ for all pre-compact $B \subseteq [0, \infty)$ such that $m(\partial B ) = 0$.
This notion of convergence on $P([0, \infty))$ is metrizable (\cite[Proposition 3.17]{resnick}), and makes $P([0, \infty))$ into a complete, separable metric space.
If $(\Omega, \mathcal{A}, \mathbb{P})$ is a probability space and $\{\mathcal{P}_n : \Omega \to P([0,\infty))\}_{n \in \ZZ^+}$ is a sequence of point processes then we say that $\{ \mathcal{P}_n \}_{n \in \ZZ^+}$ converges weakly to the point process $\mathcal{P} : \Omega \to P([0, \infty))$ if for every Borel set $A  \subseteq P([0, \infty))$ such that $\Prob_{\mathbb{P}}(\mathcal{P} \in \partial A) = 0$ one has
\begin{equation*}
	\lim_{n \to \infty} \Prob_{\mathbb{P}}(\mathcal{P}_n \in A) = \Prob_{\mathbb{P}}(\mathcal{P} \in A).
\end{equation*}
In practice, however, this is a rather difficult criteria to check, and more concrete conditions are supplied by Kallenberg's Theorem (\cite[Proposition 3.22]{resnick}).
We now state a simplified version of Kallenberg's Theorem that is adapted to our purposes; namely, for proving the convergence of a sequence of point processes to a uniform Poisson point process.
We denote Lebesgue measure on $[0, \infty)$ by $\Leb$.

\begin{theorem}\label{thm:kallenberg}
	Suppose that $(\Omega, \mathcal{A}, \mathbb{P})$ is an atomless standard probability space, that $\mathcal{P}_n : \Omega \to P([0,\infty))$ is a point process for each $n \in \ZZ^+$, and that for any finite union $R=\cup_i R_i$ of disjoint open bounded intervals in $[0,\infty)$ we have
 	\begin{enumerate}[label=(K\arabic*)]
 		\item \label{k1} $\EE_{\mathbb{P}}\left(\mathcal{P}_{n}(R)\right)\to \Leb(R)$; and
 		\item \label{k2} $\Prob_{\mathbb{P}}\left(\mathcal{P}_{n}(R)=0\right) \to e^{-\Leb(R)}$.
	\end{enumerate}
	Then $\mathcal{P}_n$ converges weakly to a uniform Poisson point process $\mathcal{P} : \Omega \to P([0, \infty))$.
\end{theorem}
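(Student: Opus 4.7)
The plan is to reduce the statement to a direct application of the general version of Kallenberg's theorem found in \cite[Proposition 3.22]{resnick}. That result says that if $\mathcal{P}$ is a simple point process on a nice state space and $\{\mathcal{P}_n\}$ is a sequence of point processes such that both $\EE_{\PP}(\mathcal{P}_n(R)) \to \EE_{\PP}(\mathcal{P}(R))$ and $\Prob_{\PP}(\mathcal{P}_n(R) = 0) \to \Prob_{\PP}(\mathcal{P}(R) = 0)$ for every $R$ in a sufficiently rich class of relatively compact continuity sets of $\mathcal{P}$, then $\mathcal{P}_n$ converges weakly to $\mathcal{P}$.

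First I would recall that on any atomless standard probability space $(\Omega,\mathcal{A},\PP)$ a uniform Poisson point process $\mathcal{P}\colon\Omega\to P([0,\infty))$ can be constructed; this is precisely the role of the hypothesis on the base space. Since $\Leb$ is diffuse, $\mathcal{P}$ is almost surely simple. Straight from the definition of such a process one reads off that $\EE_{\PP}(\mathcal{P}(R)) = \Leb(R)$ and, from $\mathcal{P}(R) \sim \Poisson(\Leb(R))$, also $\Prob_{\PP}(\mathcal{P}(R) = 0) = e^{-\Leb(R)}$. Consequently, the hypotheses \ref{k1} and \ref{k2} are exactly the convergence of the intensities and of the void probabilities of $\mathcal{P}_n$ towards those of $\mathcal{P}$.

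It then remains to verify that the class $\mathcal{T}$ of finite unions of disjoint open bounded intervals in $[0,\infty)$ is rich enough to feed into \cite[Proposition 3.22]{resnick}; concretely, $\mathcal{T}$ should be a dissecting semi-ring generating the Borel sigma-algebra of $[0,\infty)$, and every $R \in \mathcal{T}$ should be a continuity set of $\mathcal{P}$. The first requirement is a routine topological fact, since open bounded intervals form a base for the topology of $[0,\infty)$ and finite disjoint unions of them approximate every relatively compact open set from within. The second point is immediate: the boundary of any $R \in \mathcal{T}$ is a finite set, hence $\Leb$-null, so by diffuseness of the intensity $\mathcal{P}(\partial R) = 0$ almost surely.

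The substantive content of the theorem is thus entirely carried by \cite[Proposition 3.22]{resnick}; the only (quite mild) obstacle is the bookkeeping needed to match the class $\mathcal{T}$ appearing in \ref{k1}--\ref{k2} with the precise class of test sets assumed in the reference. Once this matching is checked, the weak convergence $\mathcal{P}_n \Rightarrow \mathcal{P}$ follows at once.
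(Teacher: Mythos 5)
Your proposal is correct and follows essentially the same route as the paper: both reduce the statement to \cite[Proposition 3.22]{resnick} and use the atomless standard probability space hypothesis only to guarantee the existence of a uniform Poisson point process $\mathcal{P}$ on $(\Omega,\mathcal{A},\mathbb{P})$ (the paper does this via an almost-isomorphism with the Hilbert cube). The additional bookkeeping you flag --- that finite unions of disjoint open bounded intervals form an adequate test class of $\mathcal{P}$-continuity sets --- is left implicit in the paper but is exactly the right point to check.
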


Unlike Theorem \ref{thm:kallenberg}, Kallenberg's Therorem does not guarantee the existence of a Poisson point process on $(\Omega, \mathcal{A}, \mathbb{P})$.
To deduce Theorem \ref{thm:kallenberg} from Kallenberg's Theorem (\cite[Proposition 3.22]{resnick}) it suffices to show that a uniform Poisson point process $\mathcal{P} : \Omega \to P([0,\infty))$ exists, which follows from the assumption that $(\Omega, \mathcal{A},\mathbb{P})$ is an atomless standard probability space\footnote{We will briefly outline the construction.
	Since $(\Omega, \mathcal{A},\mathbb{P})$ is an atomless standard probability space it is almost isomorphic to the Hilbert cube $[0,1]^{\NN}$ equipped with the infinite product of Lebesgue measures $\Leb^{\NN}$ (as produced by e.g. \cite[Theorem 7.28]{folland2013real}). The identity on $[0,1]^{\NN}$ defines a i.i.d. sequence of uniform random variables $\{U_i\}_{i \in \NN}$. It is routine to verify that
	\begin{equation*}
	\sum_{i=1}^\infty \delta_{\ln U_1 + \cdots + \ln U_i}
	\end{equation*}
	is a uniform Poisson point process with domain $[0,1]^{\NN}$.
	Since $(\Omega, \mathbb{P})$ and $([0,1]^{\NN}, \Leb^{\NN})$ are almost isomorphic it follows that there exists a uniform Poisson point process on $(\Omega, \mathcal{A},\mathbb{P})$.
}.

%%%%%%%%%%%%%%%%%%%%%%%%%%%
\section{Estimates for general random systems and sets}\label{sec:gal}

In this section we describe some general results on the verification of \ref{k1} and \ref{k2} for random dynamical systems. Our approach is loosely based on \cite{RSV} (see also \cite{rousseau2015hitting}).
Let $(\Omega, \mathcal{A}, \mathbb{P})$ be an atomless standard probability space, $\theta : \Omega \to \Omega$ be a $\mathbb{P}$-ergodic map with measurable inverse, $(Y,d)$ be a compact metric space and $\mathcal{B}$ be the Borel sigma-algebra on $Y$.
With $\pi_\Omega : \Omega \times Y \to \Omega$ denoting the projection onto the first coordinate, we call $E \in \mathcal{A} \times \mathcal{B}$ a compact, measurable bundle over $\Omega$ if $\pi_\Omega(E) = \Omega$ and $\pi_\Omega^{-1}(\omega) := Y_\omega$ is compact for every $\omega \in \Omega$.
\begin{definition}
  If $E$ is a compact, measurable bundle over $\Omega$ then we say that $S : E \to E$ is a random dynamical system covering $\theta$ if
  \begin{enumerate}
    \item $\pi_\Omega \circ S = \theta \circ \pi_\Omega$ i.e. $S(Y_\omega) \subseteq Y_{\theta \omega}$ for every $\omega \in \Omega$; and
    \item $S$ is measurable with respect to the relative product sigma-algebra on $E$.
  \end{enumerate}
\end{definition}
For the remainder of this section we fix a compact, measurable bundle $E$ over $\Omega$ and a random dynamical system $S : E \to E$ covering $\theta$.
For each $\omega \in \Omega$ we denote the map $x \in \pi_Y(Y_\omega) \mapsto  \pi_Y(S(\omega, x)) \in \pi_Y(Y_{\theta\omega})$ by $f_\omega$ and set $f_\omega^n := f_{\theta^{n-1} \omega} \circ \cdots \circ f_\omega$ for every $n \in \ZZ^+$.

Suppose $\nu$ is a $S$-invariant probability measure on $\Omega \times Y$ such that $\nu \circ \pi_\Omega^{-1} = \mathbb{P}$.
Then there exists a disintegration of $\nu$ with respect to $\mathbb{P}$ into \emph{sample measures} $\{ \mu_\omega \}_{\omega \in \Omega}$ on $Y$ i.e. for every $h \in L^1(\nu)$ one has
\begin{equation*}
  \intf_{\Omega \times Y} h(\omega, x) d\nu = \intf_\Omega \intf_Y h(\omega,x ) d\mu_\omega d\mathbb{P}.
\end{equation*}
Moreover this disintegration is unique $\mathbb{P}$-a.s.
Due to the invertibility of $\theta$, the $S$-invariance of $\nu$, and the $\mathbb{P}$-a.s. uniqueness of the disintegration one deduces that the set
\begin{equation*}
  \Omega' = \{\omega\in\Omega\colon \forall i,\ (f_\omega^i)_*\mu_\omega=\mu_{\theta^i\omega}\}
\end{equation*}
has full $\mathbb{P}$-measure.

Fix a measurable set $A \subseteq Y$.
We aim to examine the conditions of Theorem \ref{thm:kallenberg} for the hitting time point process $ \mathcal{N}_A$ (as defined in \eqref{eq:htpp}) in the limit as $\eps(A) := \esssup_\omega\mu_\omega(A) \to 0$.
To this end, suppose that $R= \cup_{i=1}^r R_i$ is a finite union  of disjoint open bounded intervals in $[0, \infty)$ and define
\begin{equation}\begin{split}\label{eq:constants}
  p_i &= \max \{k\ge1\colon T_\omega^k(A)\le \inf R_i\}, \\
  q_i &= \max\{k\ge1\colon T_\omega^k(A)\le \sup R_i\}-p_i,\\
  k_* &= \max_i p_i+q_i.
\end{split}\end{equation}
While $p_i, q_i$ and $k_*$ depend on $\omega$, $A$ and $R$, for simplicity we suppress these dependencies from our notation.
Notice that by the ergodic theorem all the constants defined in \eqref{eq:constants} are a.e. finite provided that $\eps(A) \ne 0$.
From \eqref{eq:constants} and the definition of $\mathcal{N}_A$ we see that
\begin{equation}\label{eq:htpp_2}
  \N_A(\omega,x)(R) = \sum_{j=1}^{k_*} 1_A(f_\omega^j x) 1_R(T_\omega^j(A)) = \sum_{i=1}^r \sum_{j=p_i+1}^{p_i + q_i}1_A(f_\omega^jx).
\end{equation}

Our first result for this section confirms that \ref{k1} is satisfied as $\eps(A)$ vanishes.
\begin{lemma}\label{lem:K1_proof}
	The expected number of points in $R$ is close to $\Leb(R)$: if $\eps(A) \ne 0$ then for $\mathbb{P}$-a.e. $\omega$ we have
	\begin{equation*}
	   \abs{\EE_{\mu_\omega}\left(\N_A(\omega,\cdot)(R)\right) - \Leb(R)} \le 2r \eps(A).
	\end{equation*}
	\begin{proof}
		By taking the expectation with respect to $\mu_\omega$ of \eqref{eq:htpp_2} and then using the $\mathbb{P}$-a.e. equivariance of $\{\mu_{\omega}\}_{\omega \in \Omega}$ under $S$ we have for $\mathbb{P}$-a.e. $\omega$ that
		\begin{equation}\begin{split}\label{eq:K1_proof_1}
			\EE_{\mu_\omega}\left(\N_A(\omega,\cdot)(R)\right) = \sum_{i=1}^r \sum_{j=p_i+1}^{p_i + q_i}\mu_{\omega}((f_\omega^j)^{-1}(A))
			&= \sum_{i=1}^r \sum_{j=p_i+1}^{p_i + q_i}\mu_{\theta^j \omega}(A) \\
			&= \sum_{i=1}^r T^{p_i+q_i}_\omega(A) - T^{p_i}_\omega(A).
		\end{split}\end{equation}
		By the definition of $p_i$ and $q_i$ we have
		\begin{equation*}
			T^{p_i}_\omega(A) \le \inf R_i < T^{p_i+1}_\omega(A)
		\end{equation*}
		and
		\begin{equation*}
			T^{p_i+q_i}_\omega(A) \le \sup R_i < T^{p_i+q_i+1}_\omega(A).
		\end{equation*}
		Hence $\abs{T^{p_i}_\omega(A) - \inf R_i} \le \mu_{\theta^{p_i + 1}\omega}(A)$ and $\abs{T^{p_i+q_i}_\omega(A) - \sup R_i} \le \mu_{\theta^{p_i +q_i + 1}\omega}(A)$.
		Applying these inequalities to \eqref{eq:K1_proof_1} and recalling the definition of $\eps(A)$ yields the required claim.
	\end{proof}
\end{lemma}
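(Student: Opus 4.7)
The plan is to compute $\EE_{\mu_\omega}(\N_A(\omega,\cdot)(R))$ in closed form using the decomposition \eqref{eq:htpp_2} and then control the error between this expectation and $\Leb(R) = \sum_{i=1}^r (\sup R_i - \inf R_i)$ using the definitions of $p_i$ and $q_i$. Throughout I will restrict to $\omega \in \Omega'$, which has full $\mathbb{P}$-measure and on which the equivariance relation $(f_\omega^j)_*\mu_\omega = \mu_{\theta^j\omega}$ holds for all $j$.

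First I would take the $\mu_\omega$-expectation in \eqref{eq:htpp_2} and push it inside the finite double sum. The key identity is
\begin{equation*}
\EE_{\mu_\omega}(1_A \circ f_\omega^j) = \mu_\omega((f_\omega^j)^{-1}(A)) = \mu_{\theta^j\omega}(A),
\end{equation*}
which follows from the equivariance of the sample measures. Summing over $j$ from $p_i+1$ to $p_i+q_i$ produces a telescoping expression: by definition of $T_\omega^k(A)$ in \eqref{eq:time}, the inner sum equals $T^{p_i+q_i}_\omega(A) - T^{p_i}_\omega(A)$. Summing over $i$ gives the exact formula for the expected count as a sum of such differences.

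Next I would compare each of these differences with the corresponding $\sup R_i - \inf R_i$. The extremality properties built into the definitions of $p_i$ and $q_i$ yield
\begin{equation*}
T^{p_i}_\omega(A) \le \inf R_i < T^{p_i+1}_\omega(A) = T^{p_i}_\omega(A) + \mu_{\theta^{p_i+1}\omega}(A),
\end{equation*}
so $|T^{p_i}_\omega(A) - \inf R_i| \le \mu_{\theta^{p_i+1}\omega}(A) \le \eps(A)$, and analogously $|T^{p_i+q_i}_\omega(A) - \sup R_i| \le \eps(A)$. A triangle inequality then bounds the error on the $i$-th interval by $2\eps(A)$, and summing over the $r$ intervals yields the claimed bound $2r\eps(A)$.

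There is no serious obstacle: the argument is a bookkeeping exercise, and the only subtlety is restricting to the full-measure set $\Omega'$ so that the equivariance identity is valid. The role of the hypothesis $\eps(A) \ne 0$ is merely to guarantee, via the ergodic theorem applied to $k \mapsto T^k_\omega(A)$, that $p_i$ and $q_i$ are finite $\mathbb{P}$-a.s., which is needed for the sums to be well defined.
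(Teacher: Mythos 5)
Your proposal is correct and follows essentially the same route as the paper: take the $\mu_\omega$-expectation of the decomposition \eqref{eq:htpp_2}, use equivariance to reduce to a telescoping sum $\sum_i T^{p_i+q_i}_\omega(A)-T^{p_i}_\omega(A)$, and then bound the endpoint discrepancies via the defining inequalities for $p_i$ and $q_i$. Your added remarks on the role of $\Omega'$ and of the hypothesis $\eps(A)\ne 0$ are accurate and consistent with the paper's setup.
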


For the remainder of this section we will develop a criteria for verifying \ref{k2} as $\eps(A) \to 0$.
For $I \subseteq \mathbb{Z}^+$ we set
\begin{equation}\label{eq:f_sets}
	F_\omega^A(I) = \bigcap_{j \in I} (f_\omega^j)^{-1}(A^c),
\end{equation}
and for $k \in \ZZ^+$ we define
\begin{equation*}
	\Delta_\omega(A,k) = \sup_{I \subseteq \mathbb{Z}^+ \cap [0, k]} \abs{\mu_\omega(A\cap F_\omega^A(I)) - \mu_\omega(A)\mu_\omega(F_\omega^A(I))}.
\end{equation*}

\begin{lemma}\label{lem:delta}
	If $\eps(A) \ne 0$ then for all $\omega\in\Omega'$ we have % that
	%for any $r\ge1$, any non-negative integers $p_1,\ldots,p_r$, and positive integers $q_1,\ldots,q_r$ such that $p_i+q_i<p_{i+1}$ for any $i=1,\ldots,r-1$ we have
	\begin{equation*}
	   \abs{\Prob_{\mu_\omega}(\mathcal{N}_A(\omega, \cdot)(R) = 0) -\prod_{i=1}^r\prod_{j=p_i+1}^{p_i+q_i}\left(1-\mu_{\theta^j\omega}(A)\right)}
	    \le
	    \sum_{i=1}^{r}\sum_{j=p_i+1}^{p_i+q_i}\Delta_{\theta^j\omega}(A,k_*-j).
  \end{equation*}
	\begin{proof}
    %\abs{\mu_\omega(\forall i, \tau_A^{\theta^{p_i}\omega}\circ f_\omega^{p_i}>q_i)-\prod_{i=1}^r\prod_{j=p_i+1}^{p_i+q_i}\left(1-\mu_{\theta^j\omega}(A)\right)}\le\sum_{i=1}^{r}\sum_{j=p_i+1}^{p_i+q_i}\Delta_{\theta^j\omega}(A,k_*-j).
		The details of the case where $r=1$ are subsumed by that of the case where $r \ge 2$, so we will only deal with the later case.
    For $a,b \in \NN$ with $b \ge 1$ set
		\begin{equation*}
		    U_{\omega}(a,b) %= \{ x : (\tau_A^{\theta^{a}\omega}\circ f_\omega^{a})(x)>b \}
        = \bigcap_{j=a+1}^{a+b} (f_\omega^j)^{-1}(A^c).
		\end{equation*}
    By the definition of $p_i$ and $q_i$ we have
    \begin{equation*}
      \mathcal{N}_A(\omega, x)(R) = 0 \iff x \in \bigcap_{i=1}^r U_{\omega}(p_i, q_i).
      %\mu_\omega(\forall i, \tau_A^{\theta^{p_i}\omega}\circ f_\omega^{p_i}>q_i).
    \end{equation*}
		Since $\omega \in \Omega'$ we have
		\begin{equation*}
			\mu_\omega\left(\bigcap_{i=1}^{r} U_{\omega}(p_i, q_i)\right) = \mu_{\theta^{p_1}\omega}\left(\bigcap_{i=1}^{r} U_{\theta^{p_1}\omega}(p_i - p_1, q_i)\right).
		\end{equation*}
    We set $I = \mathbb{Z} \cap [1, q_1 - 1] \cap \left(\bigcap_{j=2}^r [p_j - p_1, p_j + q_j -p_1 - 1]\right)$ so that
		\begin{equation*}
			U_{\theta^{p_1+1}\omega} (0, q_1 - 1)\cap \left(\bigcap_{i=2}^{r}U_{\theta^{p_1+1}\omega}(p_i - p_1 - 1, q_i)\right) = F_{\theta^{p_1 +1}\omega}^A(I).
		\end{equation*}
		It follows that
		\begin{equation}\begin{split}\label{eq:delta_2}
			\mu_\omega\left(\bigcap_{i=1}^{r} U_{\omega}(p_i, q_i)\right)= &\mu_{\theta^{p_1}\omega}\left(f_{\theta^{p_1+1}\omega}^{-1}\left(A^c \cap F_{\theta^{p_1 +1}\omega}^A(I) \right)\right) \\
			= &\mu_{\theta^{p_1+1}\omega}
			\left( F_{\theta^{p_1 +1}\omega}^A(I) \right) -\mu_{\theta^{p_1+1}\omega}
			\left( A \cap F_{\theta^{p_1 +1}\omega}^A(I) \right).
		\end{split}\end{equation}
		As $I \subseteq [1, k_* - p_1 - 1]$, by applying the definition of $\Delta_{\theta^{p_1 +1} \omega}(A, k_* - p_1 -  1)$ we find that
		\begin{equation*}
			\abs{\mu_\omega\left(\bigcap_{i=1}^{r} U_{\omega}(p_i, q_i)\right) - (1 - \mu_{\theta^{p_1 +1}\omega}(A)) \mu_{\theta^{p_1 +1}\omega}\left( F_{\theta^{p_1 +1}\omega}^A(I) \right) } \le \Delta_{\theta^{p_1 +1} \omega}(A, k_* - p_1 - 1 ).
		\end{equation*}
		By iterating the argument from \eqref{eq:delta_2} until now we obtain the bound
		\begin{equation*}\begin{split}
			&\abs{\mu_\omega\left(\bigcap_{i=1}^{r} U_{\omega}(p_i, q_i)\right) - \left(\prod_{j=p_1+1}^{p_1 + q_1}(1 - \mu_{\theta^{j}\omega}(A)) \right)\mu_{\theta^{p_1 +q_1}\omega}\left(  \bigcap_{i=2}^{r}U_{\theta^{p_1+q_1}\omega}(p_i - p_1 - q_1, q_i)\right) } \\
			&\le \sum_{j=p_1+1}^{p_1+q_1} \Delta_{\theta^{j} \omega}(A, k_* - j).
		\end{split}\end{equation*}
		The claim is obtained by recursively applying the entire argument thus far to estimate
		\begin{equation*}
			\mu_{\theta^{p_1 +q_1}\omega}\left(  \bigcap_{i=2}^{r}U_{\theta^{p_1+q_1}\omega}(p_i - p_1 - q_1, q_i)\right) .
		\end{equation*}
	\end{proof}
\end{lemma}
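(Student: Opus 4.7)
My plan is to rewrite the event $\{\mathcal{N}_A(\omega,\cdot)(R) = 0\}$ as an intersection of non-hitting events over the relevant time indices, and then to peel off one factor $(1-\mu_{\theta^j\omega}(A))$ at a time, starting from the smallest such index, by shifting via equivariance and then using the definition of $\Delta$ to split off the factor. The summand $\Delta_{\theta^j\omega}(A, k_*-j)$ appearing on the right-hand side is precisely the error produced by peeling off the $j$th factor.

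Concretely, from \eqref{eq:htpp_2} and the definition of $p_i, q_i$, the event $\mathcal{N}_A(\omega,x)(R) = 0$ is equivalent to $x \in \bigcap_{i=1}^r \bigcap_{j=p_i+1}^{p_i+q_i} (f_\omega^j)^{-1}(A^c)$. Let $j_0 = p_1 + 1$ be the smallest relevant index. Using that $\omega \in \Omega'$, we have $(f_\omega^{j_0})_* \mu_\omega = \mu_{\theta^{j_0}\omega}$, so the $\mu_\omega$-measure of this intersection equals $\mu_{\theta^{j_0}\omega}(A^c \cap F_{\theta^{j_0}\omega}^A(I))$, where $I \subseteq [1, k_* - j_0]$ is the index set obtained by subtracting $j_0$ from the remaining time indices. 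Writing $A^c \cap F = F \setminus (A \cap F)$ and invoking $\Delta_{\theta^{j_0}\omega}(A, k_* - j_0)$ lets us replace $\mu_{\theta^{j_0}\omega}(A \cap F)$ by $\mu_{\theta^{j_0}\omega}(A)\,\mu_{\theta^{j_0}\omega}(F)$ up to an additive error of $\Delta_{\theta^{j_0}\omega}(A, k_* - j_0)$, factoring out $(1-\mu_{\theta^{j_0}\omega}(A))$.

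I then iterate: what remains, $\mu_{\theta^{j_0}\omega}(F_{\theta^{j_0}\omega}^A(I))$, is of exactly the same form but with one fewer index. Advancing $j_0$ to the next element of $\bigcup_i \{p_i+1,\dots,p_i+q_i\}$ (possibly jumping across the gap between blocks $J_{i}$ and $J_{i+1}$), shifting again by equivariance to the new base fiber, and applying $\Delta_{\theta^j\omega}(A, k_*-j)$ produces the next factor $(1-\mu_{\theta^j\omega}(A))$ with the corresponding error. After exhausting all $j \in \bigcup_i \{p_i+1,\dots,p_i+q_i\}$ the product $\prod_i\prod_{j=p_i+1}^{p_i+q_i}(1-\mu_{\theta^j\omega}(A))$ is assembled, and by the triangle inequality the cumulative error is at most the stated double sum.

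The main obstacle is purely bookkeeping: at each stage of the iteration the remaining index set $I$ is generally not a contiguous interval (especially between the blocks $J_i$ when $r \ge 2$), so one must carry it explicitly, verify that $I \subseteq [1, k_* - j]$ at the current base $j$ in order to legitimately apply the $\Delta_{\theta^j\omega}(A, k_* - j)$ bound, and ensure the factor that is peeled off really corresponds to the next time index. The definition $k_* = \max_i (p_i + q_i)$ is engineered so that this inclusion holds throughout the iteration, which makes the argument close smoothly.
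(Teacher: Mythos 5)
Your proposal is correct and follows essentially the same route as the paper: rewrite the event as an intersection of non-hitting sets over the index blocks, shift to the fiber of the smallest active index by equivariance, peel off one factor $(1-\mu_{\theta^j\omega}(A))$ at a time using the definition of $\Delta_{\theta^j\omega}(A,k_*-j)$, and iterate, with the triangle inequality accumulating the errors. You also correctly identify the only subtle point, namely tracking the (generally non-contiguous) index set $I$ and checking $I \subseteq [1,k_*-j]$ so that the $\Delta$ bound applies, which is exactly the bookkeeping the paper handles.
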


\begin{lemma}\label{lem:exp}
	For $\mathbb{P}$-a.e. $\omega$ we have
	\begin{equation*}
    \lim_{\eps(A)\to 0} \prod_{i=1}^r\prod_{j=p_i+1}^{p_i+q_i}\left(1-\mu_{\theta^j\omega}(A)\right) = e^{-\Leb(R)}.
  \end{equation*}
	\begin{proof}
		Recall from the proof of Lemma \ref{lem:K1_proof} that
		\begin{equation*}
			\Leb(R_i) = \sup R_i - \inf R_i = \sum_{j=p_i +1}^{p_i + q_i} \mu_{\theta^j\omega}(A) + O(\epsilon(A)).
		\end{equation*}
		The lemma is then a consequence of the following simple and instrumental inequality:
		if $0<\eps\le 1/2$ and $x_1,\ldots,x_k \in [0,\eps]$ then
		\[
		\exp\left(-(1+2\eps)\sum_{i=1}^k x_i\right) \le \prod_{i=1}^k(1-x_i) \le \exp\left(-(1-2\eps)\sum_{i=1}^k x_i\right).
		\]
	\end{proof}
\end{lemma}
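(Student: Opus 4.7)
The plan is to reduce the product to an exponential of a sum via a standard log-sandwich, identify that sum with $\Leb(R)$ up to vanishing error using the telescoping computation already carried out in the proof of Lemma \ref{lem:K1_proof}, and conclude by squeezing. The key ingredient is the elementary inequality: for $0 < \eps \le 1/2$ and $x_1,\ldots,x_k \in [0, \eps]$,
\[
\exp\bigl(-(1+2\eps)\textstyle\sum_i x_i\bigr) \le \prod_i (1-x_i) \le \exp\bigl(-(1-2\eps)\textstyle\sum_i x_i\bigr).
\]
I would verify this factor-by-factor via the pointwise bounds $-(1+2\eps)x \le \log(1-x) \le -(1-2\eps)x$ on $[0,1/2]$, which follow from the series $-\log(1-x) = x + x^2/2 + x^3/3 + \cdots$ together with the crude estimate $x^2/2 + x^3/3 + \cdots \le 2\eps x$ whenever $x \le \eps \le 1/2$.

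Once $\eps(A) \le 1/2$, I apply the inequality with $\eps = \eps(A)$ and with $\{x_j\}$ equal to the list $\{\mu_{\theta^j\omega}(A)\}$ as $j$ ranges over $\bigsqcup_{i=1}^r \{p_i+1, \ldots, p_i + q_i\}$; the hypothesis $x_j \le \eps$ is precisely the definition of $\eps(A)$. The combined sum in the exponent becomes
\[
S_\omega(A) := \sum_{i=1}^r \sum_{j=p_i+1}^{p_i+q_i} \mu_{\theta^j\omega}(A) = \sum_{i=1}^r \bigl(T_\omega^{p_i+q_i}(A) - T_\omega^{p_i}(A)\bigr),
\]
and the bounds $|T_\omega^{p_i}(A) - \inf R_i| \le \eps(A)$ and $|T_\omega^{p_i+q_i}(A) - \sup R_i| \le \eps(A)$ already established in the proof of Lemma \ref{lem:K1_proof} yield $S_\omega(A) = \Leb(R) + O(r\eps(A))$.

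Letting $\eps(A) \to 0$, the prefactors $1 \pm 2\eps(A)$ tend to $1$ while $S_\omega(A) \to \Leb(R)$, so both exponential bounds collapse to $e^{-\Leb(R)}$ and the product is squeezed to this value. This holds for every $\omega$ at which the $p_i$ and $q_i$ are finite, which is a set of full $\mathbb{P}$-measure by the ergodic theorem. I do not anticipate any serious obstacle: the only analytic content is the elementary log-inequality, and the potential difficulty --- that the number of factors in the product can grow like $1/\eps(A)$ --- is tamed precisely because $S_\omega(A)$ remains bounded by $\sup R + O(r\eps(A))$, so that the absolute error $2\eps(A) S_\omega(A)$ contributed by the $(1\pm 2\eps(A))$ slack is itself $O(r\eps(A))$ and vanishes in the limit.
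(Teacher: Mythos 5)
Your proposal is correct and follows essentially the same route as the paper: the same elementary product--exponential sandwich inequality, combined with the identification of $\sum_{i}\sum_{j=p_i+1}^{p_i+q_i}\mu_{\theta^j\omega}(A)$ with $\Leb(R)+O(r\eps(A))$ already obtained in the proof of Lemma \ref{lem:K1_proof}. You merely supply the details (the logarithmic verification of the inequality and the explicit squeeze) that the paper leaves implicit, and your remark that the boundedness of the sum by $\sup R+O(r\eps(A))$ tames the growing number of factors is exactly the point of the inequality.
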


By Lemmas \ref{lem:delta} and \ref{lem:exp} we may conclude that \ref{k2} holds for $\mathcal{N}_A(\omega, \cdot)$ as $\eps(A) \to 0$ provided that
\begin{equation}\label{eq:decay}
  \lim_{\epsilon(A) \to 0} \sum_{i=1}^{k_*}\Delta_{\theta^i\omega}(A,k_*-i) = 0.
\end{equation}
Our strategy for obtaining this limit is to decompose $\sum_{i=1}^{k_*}\Delta_{\theta^i\omega}(A,k_*-i)$ into a mixing term, a short entrance time term, and a return time term as follows.
We define the random hitting time of a point $x$ to $A$ to be
\begin{equation*}
	\tau^\omega_A(x) := \inf\{ k > 0 : f_\omega^k(x) \in A \}.
\end{equation*}
For $g,k \in \ZZ^+$ set
\begin{equation}\begin{split}\label{eq:terms}
		\Delta_\omega(A,k,g) &= \begin{cases}
		\sup_{I \subseteq \mathbb{Z}^+ \cap (g, k]}  \abs{\mu_\omega(A\cap F_\omega^A(I)) - \mu_\omega(A)\mu_\omega(F_\omega^A(I))} & g < k, \\
		0 & \text{otherwise},
		\end{cases}\\
		G_{A,k,g}(\omega) &= \sum_{i=1}^{k} \mu_{\theta^i\omega}(A\cap \{\tau_A^{\theta^i\omega}\le g\}),\\
		H_{A,k,g}(\omega) &= \sum_{i=1}^{k} \Delta_{\theta^i\omega}(A,k-i,g) ,\\
		K_{A,k,g}(\omega) &= \sum_{i=1}^{k} \mu_{\theta^i\omega}(A) \, \mu_{\theta^i\omega}(\tau_A^{\theta^i\omega}\le g).
	\end{split}
\end{equation}
The quantities in \eqref{eq:terms} may be interpreted as follows: $H_{A,k,g}$ quantifies the amount of mixing exhibited by the system after $g$ units of time, while $G_{A,k,g}$ and $K_{A,k,g}$ quantify the probabilities of returning to and hitting $A$, respectively, before time $g$.
These quantities may be used to prove \eqref{eq:decay} due to the following lemma.

\begin{lemma}\label{lem:somme}
	For all $\omega\in\Omega'$ and any integers $g\le k$ we have
  \begin{equation*}
     \sum_{i=1}^k\Delta_{\theta^i\omega}(A,k-i) \le G_{A,k,g}(\omega)+H_{A,k,g}(\omega)+K_{A,k,g}(\omega).
  \end{equation*}
	\begin{proof}
    If the set $I \subseteq \ZZ^+ \cap [1,k-i]$ realising the supremum in the definition of $\Delta_{\theta^i\omega}(A,k - i)$ is a subset of $[g+1, k-i]$ then we have $\Delta_{\theta^i\omega}(A,k - i) = \Delta_{\theta^i\omega}(A,k - i, g)$.
    Hence in this case we trivially have
    \begin{equation}\begin{split}\label{eq:somme_2}
      \Delta_{\theta^i\omega}(A,k - i) \le &\Delta_{\theta^i\omega}(A,k - i, g) + \mu_{\theta^i\omega}(A \cap \{\tau^{\theta^i\omega}_{A} \le g\}) \\
      &+ \mu_{\theta^i\omega}(A) \mu_{\theta^i\omega}(\tau^{\theta^i\omega}_{A} \le g).
    \end{split}\end{equation}
    Otherwise, if the supremum is realised by some $I \subseteq \ZZ^+ \cap [1,k-i]$ such that $I \cap [0,g]$ is non-empty then we have the bound
		\begin{equation}\begin{split}\label{eq:somme_1}
			\big|\mu_{{\theta^i\omega}}&(A \cap F_{\theta^i\omega}^A(I)) - \mu_{\theta^i\omega}(A)\mu_{\theta^i\omega}(F_{\theta^i\omega}^A(I))\big| \\
			\le &\abs{\mu_{\theta^i\omega}(A \cap F_{\theta^i\omega}^A(I \cap (g, k])) - \mu_{\theta^i\omega}(A)\mu_{\theta^i\omega}(F_{\theta^i\omega}^A(I \cap (g, k]))} \\
			&+ \mu_{\theta^i\omega}(A) \mu_{\theta^i\omega}(F_{\theta^i\omega}^A(I \cap (g,k]) \cap F_{\theta^i\omega}^A(I \cap [0,g])^c) \\
			&+ \mu_{\theta^i\omega}(A \cap F_{\theta^i\omega}^A(I \cap [g,k]) \cap F_{\theta^i\omega}^A(I \cap [0,g])^c).
		\end{split}\end{equation}
		Since $I \cap [0,g]$ is non-empty we have
		\begin{equation*}
			F_{\theta^i\omega}^A(I \cap (g,k]) \cap F_{\theta^i\omega}^A(I \cap [0,g])^c \subseteq \{ x : \tau^{\theta^i\omega}_{A}(x) \le g\}.
		\end{equation*}
		It follows that
		\begin{equation*}
			\mu_{\theta^i\omega}(A) \mu_{\theta^i\omega}(F_{\theta^i\omega}^A(I \cap (g,k]) \cap F_{\theta^i\omega}^A(I \cap [0,g])^c) \le \mu_{\theta^i\omega}(A) \mu_{\theta^i\omega}(\tau^{\theta^i\omega}_{A} \le g)
		\end{equation*}
		and
		\begin{equation*}
		 \mu_{\theta^i\omega}(A \cap F_{\theta^i\omega}^A( I \cap [g,k]) \cap F_{\theta^i\omega}^A(I \cap [0,g])^c) \le \mu_{\theta^i\omega}(A \cap \{\tau^{\theta^i\omega}_{A} \le g\}).
		\end{equation*}
		Hence by taking supremums in \eqref{eq:somme_1} we obtain \eqref{eq:somme_2} for the case where $I \cap [0,g]$ is non-empty.
		The lemma follows by summing \eqref{eq:somme_2} over $i \in \{ 1, \dots, k\}$.
	\end{proof}
\end{lemma}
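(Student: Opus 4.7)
The plan is to prove the pointwise inequality
\[
\Delta_{\theta^i\omega}(A, k-i) \le \Delta_{\theta^i\omega}(A, k-i, g) + \mu_{\theta^i\omega}\bigl(A \cap \{\tau_A^{\theta^i\omega} \le g\}\bigr) + \mu_{\theta^i\omega}(A)\, \mu_{\theta^i\omega}(\tau_A^{\theta^i\omega} \le g)
\]
for every $i \in \{1, \ldots, k\}$, then sum over $i$ from $1$ to $k$: the three resulting sums on the right are exactly $H_{A,k,g}(\omega)$, $G_{A,k,g}(\omega)$, and $K_{A,k,g}(\omega)$, which gives the lemma.

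To prove the pointwise inequality, write $\omega' = \theta^i \omega$ and fix any $I \subseteq \mathbb{Z}^+ \cap [1, k-i]$; it suffices to bound $\abs{\mu_{\omega'}(A \cap F_{\omega'}^A(I)) - \mu_{\omega'}(A)\, \mu_{\omega'}(F_{\omega'}^A(I))}$ by the right-hand side and then take the supremum over $I$. The key move is to split $I = J \sqcup J'$ into a \emph{far} part $J = I \cap (g, k-i]$ and a \emph{near} part $J' = I \cap [1, g]$. If $J' = \emptyset$ the quantity is already bounded by $\Delta_{\omega'}(A, k-i, g)$ by the very definition of that constant, and the pointwise inequality holds since the other terms are nonnegative.

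If $J' \ne \emptyset$, I would use the set identity $F_{\omega'}^A(I) = F_{\omega'}^A(J) \cap F_{\omega'}^A(J')$, so that $F_{\omega'}^A(J) \setminus F_{\omega'}^A(I) = F_{\omega'}^A(J) \cap F_{\omega'}^A(J')^c$. Inserting and subtracting $\mu_{\omega'}(A \cap F_{\omega'}^A(J)) - \mu_{\omega'}(A)\, \mu_{\omega'}(F_{\omega'}^A(J))$ and applying the triangle inequality breaks the target quantity into three pieces. The first is a correlation along $J$ alone, dominated by $\Delta_{\omega'}(A, k-i, g)$ since $J \subseteq (g, k-i]$. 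The second and third involve $F_{\omega'}^A(J')^c$, and the crucial observation is that any point of $F_{\omega'}^A(J')^c$ visits $A$ at some iterate in $J' \subseteq [1, g]$, so
\[
F_{\omega'}^A(J')^c \subseteq \{x : \tau_A^{\omega'}(x) \le g\}.
\]
This containment dominates the second piece by $\mu_{\omega'}(A)\, \mu_{\omega'}(\tau_A^{\omega'} \le g)$ and the third by $\mu_{\omega'}(A \cap \{\tau_A^{\omega'} \le g\})$, as required.

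I do not expect any real obstacle here; the argument is essentially triangle-inequality bookkeeping once $I$ has been split correctly. The only edge case is $J = \emptyset$, where $F_{\omega'}^A(J)$ equals the entire fiber so the mixing term trivially vanishes, but the hitting-time containment above still controls the two remaining pieces in exactly the same way. The design of the splitting at depth $g$ is what ties the mixing quantity $H_{A,k,g}$ to the near-time hitting and return quantities $K_{A,k,g}$ and $G_{A,k,g}$.
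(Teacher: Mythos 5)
Your proof is correct and follows essentially the same approach as the paper: prove the pointwise bound \eqref{eq:somme_2} by splitting $I$ at depth $g$ into a far part $J = I \cap (g, k-i]$ and a near part $J'$, apply the triangle inequality to isolate the correlation term over $J$ (dominated by $\Delta_{\theta^i\omega}(A,k-i,g)$), and control the two remainder terms via the containment $F_{\theta^i\omega}^A(J')^c \subseteq \{x : \tau_A^{\theta^i\omega}(x)\le g\}$. The only cosmetic difference is that you bound an arbitrary $I$ and take the supremum at the end, whereas the paper works with a supremum-realizing $I$; the decomposition and both key observations are identical.
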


To summarize, we have shown that \ref{k2} holds as $\eps(A) \to 0$ provided that there exists some $g_A \in \ZZ^+$ such that $G_{A,k_*,g_A}$, $H_{A,k_*,g_A}$ and $K_{A,k_*,g_A}$ vanish as $\eps(A) \to 0$.
Bounding $G_{A,k_*,g_A}$ and $H_{A,k_*,g_A}$ requires knowledge about the RDS in question. Luckily, $K_{A,k_*,g_A}$ can be controlled with little effort.

\begin{lemma}\label{lem:KisOK}
	For $\omega \in \Omega'$ we have $K_{A,k_*,g}(\omega) \le g \eps(A) (\sup R)$.
  \begin{proof}
		Since
		\begin{equation*}
      \mu_{\omega}(\tau_A^{\omega} \le g) = \mu_\omega\left(\bigcup_{j=1}^g(f_\omega^j)^{-1}A\right)\le \sum_{j=1}^g \mu_{\theta^j\omega}(A) \le \eps(A) g
    \end{equation*}
    and as $\sum_{i=1}^{k_*} = \mu_{\theta^j\omega}(A) = T^{k_*}_\omega(A) \le \sup R$ we have $K_{A,k_*,g}(\omega) \le g (\sup R) \eps(A)$, as required.
	\end{proof}
\end{lemma}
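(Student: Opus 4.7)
The plan is to estimate $K_{A,k_*,g}(\omega)$ by bounding each of the two factors $\mu_{\theta^i\omega}(A)$ and $\mu_{\theta^i\omega}(\tau_A^{\theta^i\omega}\le g)$ separately. The second factor admits a uniform bound via the union bound and equivariance, and the sum of the first factors telescopes to $T_\omega^{k_*}(A)$, which is controlled by $\sup R$ by the very definition of $k_*$.

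Concretely, I would first observe that $\{\tau_A^{\theta^i\omega}\le g\}=\bigcup_{j=1}^g (f_{\theta^i\omega}^j)^{-1}(A)$, so by subadditivity
\begin{equation*}
\mu_{\theta^i\omega}(\tau_A^{\theta^i\omega}\le g)\le \sum_{j=1}^g \mu_{\theta^i\omega}\bigl((f_{\theta^i\omega}^j)^{-1}(A)\bigr).
\end{equation*}
Since $\omega\in\Omega'$, the equivariance $(f_{\theta^i\omega}^j)_*\mu_{\theta^i\omega}=\mu_{\theta^{i+j}\omega}$ rewrites each summand as $\mu_{\theta^{i+j}\omega}(A)\le \eps(A)$, yielding the uniform bound $\mu_{\theta^i\omega}(\tau_A^{\theta^i\omega}\le g)\le g\eps(A)$. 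Pulling this estimate out of the sum defining $K$,
\begin{equation*}
K_{A,k_*,g}(\omega)\le g\eps(A)\sum_{i=1}^{k_*}\mu_{\theta^i\omega}(A)=g\eps(A)\,T_\omega^{k_*}(A).
\end{equation*}

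Finally, I would invoke \eqref{eq:constants}: some index $i_0$ realises $k_*=p_{i_0}+q_{i_0}$, and by the definition of $p_{i_0}$, $q_{i_0}$ we have $T_\omega^{k_*}(A)=T_\omega^{p_{i_0}+q_{i_0}}(A)\le \sup R_{i_0}\le\sup R$, giving the claimed inequality.

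There is no real obstacle in this lemma; it is a bookkeeping exercise. The only point requiring care is that the equivariance of the sample measures is used on $\Omega'$, which is precisely why the hypothesis $\omega\in\Omega'$ appears in the statement, and that the bound $T_\omega^{k_*}(A)\le\sup R$ is read off directly from the construction of $p_i,q_i$ rather than from any dynamical input.
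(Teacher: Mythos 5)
Your proof is correct and follows essentially the same route as the paper's: both bound $\mu_{\theta^i\omega}(\tau_A^{\theta^i\omega}\le g)\le g\eps(A)$ uniformly via the union bound and equivariance (which is exactly where $\omega\in\Omega'$ is used), and both close by observing $\sum_{i=1}^{k_*}\mu_{\theta^i\omega}(A)=T_\omega^{k_*}(A)\le\sup R$ from the definition of $k_*$ in \eqref{eq:constants}. The only difference is cosmetic: you spell out the equivariance step and the identification of $i_0$ realising $k_*$ more explicitly than the paper does.
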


\section{Random subshift of finite type}\label{sec:rsft}

We first give the definition of a random subshift of finite type from \cite{boggun}.
Let $\bar{\ZZ}^+ := \ZZ^+ \cup \{\infty\}$ denote the one-point compactification of $\ZZ^+$ and recall that $X :=(\bar{\ZZ}^+)^\NN$ is a compact metric space when equipped with the metric
\begin{equation*}
  d(x,y) = \sum_{i=0}^\infty 2^{-1} \abs{x_i^{-1} - y_{i}^{-1}},
\end{equation*}
where one takes $1/\infty = 0$. Denote the left-shift on $X$ by $\sigma$.
Let $(\Omega,\mathcal{A},\PP)$ be an atomless standard probability space and fix an invertible, $\mathbb{P}$-ergodic map $\theta : \Omega \to \Omega$ with measurable inverse.
Let $b:\Omega \to \mathbb \ZZ^+$ be a random variable such that  $\ln b \in L^1(\mathbb{P})$.
Let $Q=\left\{Q(\omega)=(a_{ij}(\omega)):\omega\in \Omega\right\}$ be a random transition matrix over $\theta$, i.e. for any $\omega\in\Omega$, $Q(\omega)$ is a $b(\omega)\times b(\theta\omega)$-matrix with entries in $\{0,1\}$, at least one non-zero entry in each row and column, and such that for any $i, j\in\ZZ^+$ the map
\begin{equation*}
  \omega \mapsto
  \begin{cases}
    a_{ij}(\omega) &  j \le b(\omega), i \le b(\theta \omega)\\
    0 & \text{otherwise} \\
  \end{cases}
\end{equation*}
is measurable.
For $\omega\in \Omega$ let $X_\omega = \{1, \dots, b(\omega)\}$ and set
\begin{equation*}
  \E_\omega =\{x=(x_0,x_1,\ldots)\colon x_i \in X_{\theta^i \omega} \text{ and } a_{x_i x_{i+1}}(\theta^i\omega)=1\text{ for all } i\in\NN\}\subset X.
\end{equation*}
Then the set
\begin{equation*}
  \E_Q = \{(\omega,x)\colon \omega\in\Omega,x\in\E_\omega\}
\end{equation*}
is a compact, measurable bundle over $\Omega$, and the map $S_Q : \E_Q \to \E_Q$ that is defined by $S_Q(\omega,x)= (\theta \omega,\sigma x)$ is a random dynamical system covering $\theta$.
A random subshift of finite type is any map arising from this construction (i.e. the construction of $S_Q$).

For the remainder of this section we fix a random subshift of finite type $S_Q$ and assume the existence of a $S_Q$-invariant probability measure $\nu$ with marginal $\PP$ on $\Omega$. We denote by $\{\mu_\omega\}_{\omega \in \Omega}$ the disintegration of $\nu$ into sample measures on $\E_\omega$.
For $y \in X$ we denote by $C_n(y)=\{z \in X : y_i=z_i \text{ for all } 0\le i\le n-1\} $ the \emph{$n$-cylinder} that contains $y$, and let $\F_0^n$ denote the sigma-algebra generated by all $n$-cylinders. We also define $C_0(y) = X$.
For this section we fix a sequence $A_n\in\F_0^n$ and study the (random) hitting time point processes for $A_n$ in the limit as $n \to \infty$.

Our first main result for this section provides conditions under which this process converges a.s. to a uniform Poisson point process.
We assume the following: there is a positive random variable $D \in L^p(\PP)$ for some $p\in(0,1]$, real-valued functions $\beta_0,\beta_1$ and either $\alpha$ or $\phi$ defined on $\NN$ such that for all $m,n$, $A\in\F_0^n$ and $B\in \F_0^{m}$, $\PP$-almost surely $\omega \in \Omega$:
\begin{enumerate}[label=(\Roman*)]
\item \label{sft1} (Cylinder returns) For all $0\le j\le n\le k$
\[
\begin{split}
\mu_\omega(A_n\cap\sigma^{-j}(A_n) &\le \mu_\omega(A_n) \beta_0(j), \text{ and} \\
\mu_\omega(A_n\cap\sigma^{-k}(A_n) &\le \mu_\omega(A_n) \beta_1(n).
\end{split}
\]
\item\label{sft2} (Decay of cylinders) With $\displaystyle  \eps(n) := \esssup_{\omega} \mu_\omega(A_n)$ we have $\displaystyle \lim_{n \to \infty} \eps(n) = 0$ and $\eps(n) \ne 0$ for all $n \in \ZZ^+$.
\item \label{sft3} (Mixing) Either the system is uniformly fibered mixing with rate $\alpha$:
\[
\forall g \in \ZZ^+ \quad \left|\mu_\omega(A\cap\sigma^{-g-n}B) -\mu_\omega(A)\mu_{\theta^{n+g}\omega}(B)\right|\le D(\omega)\alpha(g),
\]
or it is fibered $\phi$-mixing:
\[
\forall g \in \ZZ^+ \quad \left|\mu_\omega(A\cap\sigma^{-g-n}B) -\mu_\omega(A)\mu_{\theta^{n+g}\omega}(B)\right|\le \mu_\omega(A)\phi(g).
\]
\item \label{sft4} Setting $q_n=\min\{j\ge1\colon A_n\cap \sigma^{-j}A_n\neq\emptyset\}$, we assume that
\begin{equation}\label{eq:zero}
\lim_{n\to\infty}\inf_{h\ge 0}
h \beta_1(n) + \left(\sum_{j=q_n}^n \beta_0(j)\right) + \min(\alpha(h) k_n^{\frac1p},\phi(h))+(h+n) \eps(n)
=0.
\end{equation}
\end{enumerate}
The last condition penalises $A_n$ for having short return times relative to $n$. In particular, in the case where $A_n = C_n(y)$ the condition \ref{sft4} discards periodic $y$ (see e.g. \cite[Lemma 4.10]{rousseau2015hitting}).

\begin{theorem}\label{thm:main1}
  If \ref{sft1}-\ref{sft4} hold then the hitting time process
  $$
  \N_{A_n}(\omega,x) := \sum_{k=1}^\infty  1_{A_n}(\sigma^k(x)) \delta_{T_\omega^k(A_n)}.
  $$
  converges for $\PP$ a.e. $\omega$ under the law of $\mu_\omega$ to a uniform Poisson Point Process on $[0,\infty)$.
  In particular, we have $\PP$-a.s. convergence in distribution of the first hitting time:
  \begin{equation}\label{eq:cvinlaw}
	   \lim_{n\to\infty}\sup_{k\ge0}\abs{\mu_\omega\left(x\in X\colon \tau_{A_n}(x)>k\right)- \exp\left(-\sum_{i=1}^k\mu_{\theta^i\omega} (A_n)\right)}=0.
   \end{equation}
\end{theorem}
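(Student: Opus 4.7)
The plan is to apply Theorem \ref{thm:kallenberg} to $\mathcal{N}_{A_n}(\omega,\cdot)$ for $\PP$-a.e.\ $\omega$. Hypothesis \ref{sft2} gives $\eps(A_n)=\eps(n)\to 0$, so Lemma \ref{lem:K1_proof} immediately yields \ref{k1}. The substantive task is \ref{k2}: combining Lemmas \ref{lem:delta} and \ref{lem:exp}, this reduces to showing that
\[
  \sum_{i=1}^{k_*}\Delta_{\theta^i\omega}(A_n,k_*-i) \to 0 \quad \text{as } n\to\infty, \ \PP\text{-a.s.},
\]
for every finite union $R$ of disjoint open bounded intervals. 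By Lemma \ref{lem:somme}, this sum is dominated by $G_{A_n,k_*,g}(\omega)+H_{A_n,k_*,g}(\omega)+K_{A_n,k_*,g}(\omega)$ for any admissible $g$; I take $g=h_n+n$, where $h_n$ is chosen so that the infimand in \ref{sft4} is at most $1/n$.

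Each term is controlled by one of the hypotheses. Since $A_n\cap\sigma^{-j}A_n=\emptyset$ for $j<q_n$, \ref{sft1} gives
\[
  G_{A_n,k_*,h_n+n}(\omega)\le(\sup R)\Bigl(\sum_{j=q_n}^{n}\beta_0(j)+h_n\beta_1(n)\Bigr).
\]
For $H$, any $I\subseteq\ZZ^+\cap(h_n+n,k_*-i]$ factorises as $F^{A_n}_{\theta^i\omega}(I)=\sigma^{-j_0}B'$ with $j_0:=\min I>h_n+n$ and $B'$ a finite union of cylinders; \ref{sft3} (taking $\alpha$ non-increasing without loss of generality) yields
\[
  \Delta_{\theta^i\omega}(A_n,k_*-i,h_n+n)\le D(\theta^i\omega)\alpha(j_0-n)\le D(\theta^i\omega)\alpha(h_n)
\]
in the uniform mixing case. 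Summation together with the elementary inequality $\sum_i D(\theta^i\omega)\le\bigl(\sum_i D(\theta^i\omega)^p\bigr)^{1/p}$ (valid for $0<p\le 1$) and Birkhoff's theorem applied to $D^p\in L^1(\PP)$ gives $H\le C(\omega)\alpha(h_n)k_n^{1/p}$; in the $\phi$-mixing case one obtains $H\le(\sup R)\phi(h_n)$ directly. Lastly, Lemma \ref{lem:KisOK} gives $K\le(h_n+n)\eps(n)(\sup R)$. Summing, $G+H+K$ is bounded by the infimand of \ref{sft4} (evaluated at $h_n$) times a constant depending only on $\omega$ and $R$, hence vanishes as $n\to\infty$. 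This verifies \ref{k2} and Theorem \ref{thm:kallenberg} yields the claimed convergence.

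For \eqref{eq:cvinlaw}, note that $\{\tau_{A_n}>k\}=F^{A_n}_\omega([1,k])$; a single-interval analogue of Lemma \ref{lem:delta}, proven by the same iterated conditioning, gives
\[
  \bigl|\mu_\omega(\tau_{A_n}>k)-\prod_{j=1}^k(1-\mu_{\theta^j\omega}(A_n))\bigr|\le\sum_{j=1}^k\Delta_{\theta^j\omega}(A_n,k-j),
\]
and Lemma \ref{lem:exp} compares the product to the exponential. Uniformity in $k$ is obtained by splitting at a threshold $T_n\to\infty$ (growing slowly compared to the rate from \ref{sft4}): for $k$ with $T_\omega^k(A_n)\le T_n$ the previous estimates with $\sup R=T_n$ are uniform and $o(1)$, while for $k$ with $T_\omega^k(A_n)>T_n$ both $\mu_\omega(\tau_{A_n}>k)$ and $\exp(-T_\omega^k(A_n))$ are bounded by $e^{-T_n}$ up to the same small error, making the tail $o(1)$.

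The main obstacle is the uniform mixing case of \ref{sft3}: the random constant $D$ is only in $L^p(\PP)$ with $p\le 1$, so $\sum_{i=1}^{k_*}D(\theta^i\omega)$ is controlled only as $O(k_*^{1/p})$ via the $p$-power mean inequality combined with Birkhoff on $D^p$, never linearly in $k_*$. Condition \ref{sft4} is calibrated precisely to absorb this growth into the decay rate of $\alpha$.
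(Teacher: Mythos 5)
Your argument for the main part of the theorem — applying Theorem \ref{thm:kallenberg}, verifying \ref{k1} via Lemma \ref{lem:K1_proof}, and reducing \ref{k2} to $G_n+H_n+K_n\to 0$ through Lemmas \ref{lem:delta}, \ref{lem:exp}, \ref{lem:somme} with the gap $g=h_n+n$, then invoking the bounds of Lemmas \ref{lem:GNas}, \ref{lem:H_alpha}, \ref{lem:H_phi}, \ref{lem:KNas} and absorbing everything into \ref{sft4} — is exactly the paper's proof, and your identification of the $L^p$ constant $D$ as the reason for the $k_n^{1/p}$ factor is on point.

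Where you diverge is in the passage to the uniform estimate \eqref{eq:cvinlaw}. You propose to re-run the quantitative machinery with a growing time horizon $T_n$: for $T_\omega^k(A_n)\le T_n$ you redo the $G+H+K$ bounds with $\sup R = T_n$, and for the tail you exploit the smallness of both sides. This works, but it is more delicate than it first appears, precisely because of the obstacle you flag at the end: in the $\alpha$-mixing branch the $H$-bound carries a factor $k_*^{1/p}\approx(T_n/\mu(A_n))^{1/p}$ that grows with $T_n$, and the rate in \ref{sft4} is calibrated for a \emph{fixed} $R$, so you must choose $T_n$ with $T_n\to\infty$ yet $T_n^{1/p}$ dominated by the reciprocal of the rate in \ref{sft4} (and also $T_n\,\eps(n)\to 0$ for the Lemma \ref{lem:exp} comparison to remain tight). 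All of this can be made to work, but the paper avoids it entirely by a soft argument: once the point-process convergence is established, the functions $d_n(s):=\mu_\omega(\mathcal{N}_{A_n}(\omega,\cdot)([0,s])=0)$ converge pointwise to $e^{-s}$; each $d_n$ is monotone decreasing and the limit $e^{-s}$ is continuous and decreasing, so by the standard P\'olya/Dini-type argument the convergence is automatically uniform on $[0,\infty)$, and \eqref{eq:cvinlaw} follows by evaluating at $s=T_\omega^k(A_n)$. The paper's route is shorter, requires no re-derivation, and in particular does not need to revisit the $\alpha$-mixing case at all. Your approach buys nothing here that the monotonicity argument does not give for free, so if you keep it you should at least make explicit the constraint on $T_n$ in the $\alpha$-mixing case; otherwise the monotonicity shortcut is preferable.
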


Before proving Theorem \ref{thm:main1} we describe a specific situation in which \ref{sft1}-\ref{sft4} hold;
namely, when $\nu$ is a $S_Q$-invariant Gibbs measure for a H\"older continuous potential and $A_n=C_n(y)$ for certain non-periodic $y$.
In order to properly state our results we recall the setting of \cite{boggun}.

Let $L^0(\Omega, \mathcal{C}(\E_Q))$ denote the class of families $\psi = \{\psi(\omega) \in \mathcal{C}(\E_\omega, \mathbb{R})\}_{\omega \in \Omega}$ of real-valued, continuous maps such that $(\omega, x) \mapsto \psi(\omega, x)$ is measurable on $\E_Q$, and set
\begin{equation*}
  L^1_{\mathbb{P}}(\Omega, \mathcal{C}(\E_Q)) = \left\{ \psi \in L^0(\Omega, \mathcal{C}(\E_Q)) : \intf \norm{\psi(\omega, \cdot)}_{\mathcal{C}(\E_\omega,\mathbb{R})} d \mathbb{P} < \infty \right\}.
\end{equation*}
Let $\mathbb{F}_Q$ denote set of $\psi \in L^0(\Omega, \mathcal{C}(\E_Q))$ for which there exists $a \ge 0$ and $r \in (0,1)$ such that for every $n \in \NN$ and $\mathbb{P}$-a.e. $\omega$ one has
\begin{equation}\label{eq:holder_obs}
  \sup \left\{ \abs{ \psi(\omega,x) - \psi(\omega,y)}\, : \,x,y \in \E_\omega, x \in C_n(y) \right\} \le a r^n.
\end{equation}

We now give the definition of a random Gibbs measure from \cite{boggun}.
\begin{definition}\label{def:gibbs}
  Suppose that $S_Q$ is a random subshift of finite type and that $\nu$ is measure on $\E_Q$ with marginal $\mathbb{P}$ on $\Omega$ and disintegration $\{\mu_{\omega}\}_{\omega \in \Omega}$.
  We say that $\nu$ is a random Gibbs measure for the potential $\psi \in \mathbb{F}_Q$ if there exists $H \in L^1_{\mathbb{P}}(\Omega,\mathcal{C}(\E_Q))$ such that $\ln H \in \mathbb{F}_Q$ and so that if we define
  \begin{equation*}
    F_n(\omega, x) = \frac{\exp\left(\sum_{i=0}^{n-1} \psi(\theta^i\omega, \sigma^ix)\right) H(\omega,x)}{\sum_{s \in \E_\omega, \, \sigma^n x = \sigma^n s}\exp\left(\sum_{i=0}^{n-1} \psi(\theta^i\omega, \sigma^i s)\right) H(\omega,s)}
  \end{equation*}
  then for every $n \in \NN$ and $f \in L^1_{\mathbb{P}}(\Omega, \mathcal{C}(\E_A))$ one has
  \begin{equation*}
    \intf f(\omega, x) d\mu_{\omega}(x) = \intf \sum_{z \in \sigma^{-n}(s)\cap \E_\omega}f(\omega, z) F_n(\omega, z) d\mu_{\theta^n \omega}(s) \quad \mathbb{P}\text{-a.s.}
  \end{equation*}
\end{definition}

We formulate two additional conditions\footnote{These are (C1) and (C2) from \cite{boggun}.}, the first is a uniform aperiodicity condition on the product of the random transition matrix $Q$:
\begin{enumerate}[label=(G1)]
  \item \label{g1} There exists $M > 0$ such that $Q(\omega) \cdots Q(\theta^{M-1}\omega)$ has no zero entries for all $\omega \in \Omega$.
\end{enumerate}
The second concerns the boundedness of the transfer operator associated to a potential $\psi \in \mathbb{F}_Q$, although we suppress operator-theoretic details in our formulation:
\begin{enumerate}[label=(G2)]
  \item \label{g2} We have
  \begin{equation*}
    \intf \sup_{x \in \E_\omega} \abs{\ln\left(\sum_{y \in \E_{\theta^{-1}(\omega)} \cap \sigma^{-1}(x)} \exp(\psi(\theta^{-1}\omega, y))\right)} d\mathbb{P}(\omega) < \infty.
  \end{equation*}
\end{enumerate}
We recall from \cite{boggun} the following existence and uniqueness result for random $S_Q$-invariant Gibbs measures.
\begin{theorem}[{\cite[Corollary 4.12]{boggun}}]\label{thm:gibbs_existence}
  If $b \ge 2$ a.s., \ref{g1} holds, and $\psi \in \mathbb{F}_Q$ satisfies \ref{g2} there there is a unique $S_Q$-invariant random Gibbs probability measure for the potential $\psi$.
\end{theorem}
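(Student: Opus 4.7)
The plan is to apply a random Perron--Frobenius theorem to the family of transfer operators $\{L_\omega\}_{\omega\in\Omega}$ defined on $\mathcal{C}(\E_\omega,\RR)$ by
\[
(L_\omega f)(x) = \sum_{y\in\sigma^{-1}(x)\cap\E_\omega}\exp(\psi(\omega,y))\,f(y), \qquad x\in\E_{\theta\omega}.
\]
The goal is to produce a measurable family of positive eigenfunctions $H(\omega,\cdot)$ and an eigenvalue cocycle $\lambda(\omega)$ satisfying $L_\omega H(\omega,\cdot)=\lambda(\omega)H(\theta\omega,\cdot)$, together with dual eigenmeasures $\nu_\omega$ with $L_\omega^*\nu_{\theta\omega}=\lambda(\omega)\nu_\omega$. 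The $S_Q$-invariant Gibbs measure will then be the one whose sample measures are $\mu_\omega=H(\omega,\cdot)\,\nu_\omega$, renormalised to unit mass.

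First I would use \ref{g2} to obtain $\log\|L_\omega 1\|_\infty\in L^1(\PP)$, providing the log-integrable cocycle needed for a multiplicative ergodic argument. The core step is then a cone-contraction estimate: introduce a cone $\mathcal{K}_\omega$ of positive functions whose logarithm satisfies a H\"older bound compatible with \eqref{eq:holder_obs}, and show that $L_\omega$ maps $\mathcal{K}_\omega$ into $\mathcal{K}_{\theta\omega}$, while the $M$-fold composition $L_{\theta^{M-1}\omega}\circ\cdots\circ L_\omega$ has image of uniformly bounded Hilbert projective diameter inside $\mathcal{K}_{\theta^M\omega}$. Here uniform aperiodicity \ref{g1} is indispensable, because after $M$ steps every fibre symbol is reachable from every other, which forces any two functions in the cone to become pointwise comparable with a controlled ratio. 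Birkhoff's theorem for positive operators then converts this finite diameter into a strict Hilbert-metric contraction with rate independent of $\omega$.

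From the uniform contraction, a fibred fixed-point argument along the $\theta$-orbit (the random incarnation of Ruelle--Perron--Frobenius) yields a unique normalised pair $(\lambda,H)$ in the cone, and duality supplies the conformal measures $\nu_\omega$. Verifying the Gibbs identity in Definition \ref{def:gibbs} is then a direct computation: iterate the eigenrelation $n$ times and pair against $f$ via the duality $L_\omega^*\nu_{\theta\omega}=\lambda(\omega)\nu_\omega$, so that the denominator in $F_n(\omega,x)$ absorbs exactly the cocycle $\prod_{i=0}^{n-1}\lambda(\theta^i\omega)\,H(\theta^n\omega,\sigma^n x)$ and the identity reduces to the definition of $\mu_\omega$. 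Uniqueness follows from the cone-contraction: any second $S_Q$-invariant random Gibbs measure must arise from a second pair $(H',\lambda')$ in the cone, which the Hilbert contraction forces to coincide with $(H,\lambda)$ up to a scalar cocycle, and the normalisation $\nu_\omega(\E_\omega)=1$ then pins everything down. The main obstacle is the cone step: choosing a cone simultaneously preserved by $L_\omega$, of bounded diameter after $M$ iterates, and robust against the random fibre sizes $b(\omega)$, while keeping the contraction factor and all auxiliary cocycles log-integrable against $\PP$. Once that is established the remainder is structural.
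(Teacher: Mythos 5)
This theorem is not proved in the paper at all: it is quoted verbatim from Bogensch\"utz and Gundlach (\cite[Corollary 4.12]{boggun}), so there is no in-paper argument against which to compare your attempt. Your sketch is, however, a reasonable outline of the random Ruelle--Perron--Frobenius construction on which that reference is built: a measurable cocycle of transfer operators, an equivariant eigenfunction $H$ and eigenvalue cocycle $\lambda$, dual conformal measures $\nu_\omega$, and a Birkhoff cone-contraction whose uniformity is sourced from the aperiodicity assumption \ref{g1} and the uniform H\"older modulus built into $\mathbb{F}_Q$, with \ref{g2} supplying the log-integrability needed to make the eigenvalue cocycle tempered. Two points you would need to treat carefully in a full write-up, and which you only gesture at: the fibre alphabet size $b(\omega)$ is assumed only log-integrable rather than bounded, so the finite-diameter estimate for the image of the $M$-fold composed operator must come from \ref{g1} and the H\"older constant of $\psi$ alone and not from any uniform bound on the number of preimages; and measurability in $\omega$ of the eigendata $(H(\omega,\cdot),\lambda(\omega),\nu_\omega)$ is not automatic from a fibrewise fixed-point argument and must be established, typically by a measurable pullback-limit construction. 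Both issues are resolved in \cite{boggun}, but since the present paper simply invokes that result, your proposal should be read as a reconstruction of the cited source rather than as an alternative to anything in this paper.
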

Our second main result for this section is the following.

\begin{theorem}\label{thm:gibbs}
  Suppose that $b \ge 2$ a.s., that \ref{g1} holds, and that $\psi \in \mathbb{F}_Q$ satisfies \ref{g2}. Let $\nu$ be the unique $S_Q$-invariant random Gibbs probability measure for $\psi$.
  If $y \in X$ is non-periodic and $\nu(C_n(y)) > 0$ for every $n \in \ZZ^+$ then \ref{sft1}-\ref{sft4} hold:
  \begin{enumerate}[label=(\Roman*)]
    \item There exists $c,a > 0$ such that if $\beta_0(n) = c\exp(-an) = \beta_1(n)$ then for all $0 \le j \le n \le k$ and a.e. $\omega$ we have
    \begin{equation*}\begin{split}
      \mu_\omega(C_{j}(y)\cap\sigma^{-j}(C_{n}(y)) &\le \mu_\omega(C_{n}(y)) \beta_0(j), \text{ and} \\
      \mu_\omega(C_{n}(y)\cap\sigma^{-k}(C_{n}(y)) &\le \mu_\omega(C_{n}(y)) \beta_1(n).
    \end{split}\end{equation*}
    \item The measure of cylinders decays exponentially fast and is non-degenerate i.e. there exists $c,a>0$ such that $0 < \eps(n) \le ce^{-an}$ for every $n \in \ZZ^+$.
    \item The system is a.s. fibered $\phi$-mixing with $\phi(n) = O(\exp(-an))$ for some $a > 0$.
    \item We have $q_n \to \infty$, and upon setting $h_n = n$ one has as a consequence of \ref{sft1}-\ref{sft3} that
    \begin{equation*}
      \lim_{n\to\infty} h_n \beta_1(n) +  \left(\sum_{j=q_n}^n \beta_0(j)\right) + \phi(h_n) + (h_n+n) \eps(n) =0.
    \end{equation*}
  \end{enumerate}
  Hence the result of Theorem \ref{thm:main1} holds in this setting with the sequence $A_n=C_n(y)$.
\end{theorem}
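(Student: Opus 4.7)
The existence and uniqueness of $\nu$ are provided by Theorem~\ref{thm:gibbs_existence}. It remains to verify \ref{sft1}--\ref{sft4} for this measure and the cylinders $A_n = C_n(y)$. My central tool, established in \cite{boggun} under \ref{g1} and \ref{g2}, is the uniform two-sided Gibbs estimate: there exist deterministic $K_1, K_2 > 0$ such that
\[
K_1 \exp\bigl(S_n\psi(\omega,z) - \Lambda_n(\omega)\bigr) \le \mu_\omega(C_n(z)) \le K_2 \exp\bigl(S_n\psi(\omega,z) - \Lambda_n(\omega)\bigr)
\]
uniformly in $n$, $\mathbb{P}$-a.e.\ $\omega$, and admissible $z$, where $\Lambda_n(\omega) = \sum_{i=0}^{n-1}P(\theta^i\omega)$ is a random pressure cocycle and $S_n\psi(\omega,z) = \sum_{i=0}^{n-1}\psi(\theta^i\omega,\sigma^i z)$. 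A direct consequence, using H\"older regularity of $\psi$ and $\ln H$, is the quasi-Bernoulli inequality
\[
\mu_\omega(C_{a+b}(uv)) \le K\, \mu_\omega(C_a(u))\, \mu_{\theta^a\omega}(C_b(v))
\]
for admissible concatenations $uv$. These two estimates drive the entire verification.

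\textbf{Verifying \ref{sft1}--\ref{sft3}.} Condition \ref{sft2} is immediate from the Gibbs upper bound together with the uniform lower bound $\Lambda_n(\omega) \ge an$ ensured by \ref{g1} and boundedness of $\psi$, which yields $\eps(n) \le c\, e^{-an}$ with deterministic $c, a > 0$. Condition \ref{sft3} is the spectral-gap theorem for the random transfer operator in \cite{boggun}, translated into $\phi$-mixing via duality. For \ref{sft1}, I apply the quasi-Bernoulli inequality. For the first bound $\mu_\omega(C_n(y) \cap \sigma^{-j}C_n(y))$ with $0 \le j \le n$, non-periodicity of $y$ forces the set to be empty for $j < q_n$, while for $j \ge q_n$ it is a single $(j+n)$-cylinder; splitting at position $n$ via quasi-Bernoulli produces a tail factor $\mu_{\theta^n\omega}(C_j(\cdot)) \le \eps(j)$ times $\mu_\omega(C_n(y))$, yielding $\beta_0(j) = K c\, e^{-aj}$. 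For the second bound with $k \ge n$, decompose $C_n(y) \cap \sigma^{-k}C_n(y)$ as a disjoint union over the free $(k-n)$-letter middle words and apply quasi-Bernoulli twice; the middle-word measures sum to at most $1$, giving
\[
\mu_\omega\bigl(C_n(y) \cap \sigma^{-k}C_n(y)\bigr) \le K^2\, \mu_\omega(C_n(y))\, \mu_{\theta^k\omega}(C_n(y)) \le K^2 c\, e^{-an}\, \mu_\omega(C_n(y)),
\]
so $\beta_1(n) = K^2 c\, e^{-an}$.

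\textbf{Verifying \ref{sft4} and the main obstacle.} Non-periodicity of $y$ forces $q_n \to \infty$. Setting $h_n = n$, each of the four terms $h_n \beta_1(n) = n c\, e^{-an}$, $\sum_{j=q_n}^n \beta_0(j) \le C e^{-a q_n}$, $\phi(n) = O(e^{-an})$, and $(h_n + n)\eps(n) = O(n e^{-an})$ tends to zero as $n \to \infty$, verifying \ref{sft4}. The principal technical challenge is not in any single step but in ensuring that the constants $K_1, K_2, K, c, a$ above are all deterministic (independent of $\omega$); this is precisely why the uniform aperiodicity hypothesis \ref{g1} (rather than merely a $\mathbb{P}$-a.e.\ analogue) is imposed, and the uniform Gibbs machinery of \cite{boggun} resolves this point so that the cylinder manipulations above close the argument.
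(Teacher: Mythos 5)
Your overall strategy mirrors the paper's: quasi-Bernoulli (bounded-distortion) estimates from the random Gibbs structure drive \ref{sft1}, an exponential decay of cylinder measures gives \ref{sft2}, $\phi$-mixing comes from the quenched spectral gap for \ref{sft3}, and \ref{sft4} is a bookkeeping consequence. The verifications of \ref{sft1}, \ref{sft3} and \ref{sft4} as you sketch them are essentially the arguments in the paper (Propositions~\ref{prop:small_short_returns}, \ref{prop:quenched_decay} and the deduction in the proof of Theorem~\ref{thm:gibbs}).

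However, your treatment of \ref{sft2} has a genuine gap, and it is the technical heart of the whole verification. You assert that $\eps(n)\le ce^{-an}$ is ``immediate from the Gibbs upper bound together with the uniform lower bound $\Lambda_n(\omega)\ge an$ ensured by \ref{g1} and boundedness of $\psi$.'' First, boundedness of $\psi$ is not a hypothesis: $\psi\in\mathbb{F}_Q$ gives only bounded oscillation \emph{within} each fiber (the $n=0$ case of \eqref{eq:holder_obs}), while $\sup_x|\psi(\omega,x)|$ may be unbounded in $\omega$, and \ref{g2} provides only an $L^1(\mathbb{P})$ control, not an a.s.\ bound. Second, even granting control of $\psi$, the claimed uniform pressure bound $\Lambda_n(\omega)\ge an$ is not immediate from the two-sided Gibbs inequality; one must show that the random partition function (equivalently, the normalizing sums in the Gibbs kernel $F_n$) grows at a uniform exponential rate. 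This is precisely what the paper proves in Lemma~\ref{lemma:cylinder_less_than_1} and Proposition~\ref{prop:exp_decay_cylinder}: using $b\ge 2$, the $M$-step aperiodicity \ref{g1}, and the bounded fiber-oscillation of $\psi$, it shows that after each block of $M$ steps the normalizer $\sum_{u\in\E_\omega\cap\sigma^{-M}(s)}L_M(\omega,u,\cdot)$ is at least $1+C$ for a deterministic $C>0$, and then iterates this submultiplicatively. This is exactly the step you describe as the ``principal technical challenge'' and then wave away as resolved by ``the uniform Gibbs machinery.'' The uniform two-sided Gibbs inequality alone (which is \cite[Proposition~4.7]{boggun}, used in the paper as Lemma~\ref{lemma:distortion}) does not give you the exponent: it gives $\mu_\omega(C_n(x))\asymp\exp(S_n\Psi(\omega,x))$ with deterministic comparison constants, but whether the right-hand side decays exponentially is a separate statement requiring the argument above. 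To close the gap you should replace the appeal to an unproven pressure lower bound by the explicit aperiodicity-driven estimate on the Gibbs normalizers.

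One small additional remark: for \ref{sft3} the paper draws the uniform quenched $\phi$-mixing rate from \cite{morris2006topics} (Proposition~\ref{prop:quenched_decay}), which it notes \emph{strengthens} the conclusions of \cite{boggun}; citing only \cite{boggun} for a fibered $\phi$-mixing with deterministic rate is not quite sufficient as stated.
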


We make two remarks about Theorem \ref{thm:gibbs}. Firstly, since $\nu$ is unique we may assume that $\nu$ was constructed using the machinery of \cite{boggun}, which allows us to assume that we are in the setting of \cite{boggun} throughout the proof of Theorem \ref{thm:gibbs}.
Secondly, in the following lemma we provide a simple sufficient condition that guarantees that $\nu(C_n(y)) > 0$ for every $n \in \ZZ^+$ without \emph{a priori} knowledge of $\nu$. We defer the proof to Section \ref{sec:gibbs}.
\begin{lemma}\label{lemma:pos_measure}
  Suppose that $b \ge 2$ a.s., that \ref{g1} holds, and that $\psi \in \mathbb{F}_A$ satisfies \ref{g2}. Let $\nu$ be the unique $S_Q$-invariant random Gibbs probability measure for $\psi$.
  If $\mathbb{P}\{ \omega : y \in \E_\omega\} > 0$ then $\nu(C_n(y)) > 0$ for every $n \in \ZZ^+$.
\end{lemma}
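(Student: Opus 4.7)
The plan is to apply the disintegration formula of Definition \ref{def:gibbs} to the observable $f(\omega, x) = 1_{C_n(y)}(x)$ at depth $N := n + M - 1$, where $M$ is the constant from \ref{g1}. Set $T := \{\omega \in \Omega : y \in \E_\omega\}$; this set is measurable as a countable intersection of events of the form $\{\omega : a_{y_i y_{i+1}}(\theta^i \omega) = 1\}$, and $\PP(T) > 0$ by hypothesis. Since $\nu(C_n(y)) \ge \intf_T \mu_\omega(C_n(y)) \,d\PP$, it suffices to prove $\mu_\omega(C_n(y)) > 0$ for each $\omega \in T$ at which the Gibbs disintegration formula holds.

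Fix such an $\omega$. The prefix $(y_0, \dots, y_{n-1})$ is admissible at $\omega$ because $y \in \E_\omega$, and applying \ref{g1} with base point $\theta^{n-1}\omega$ shows that the matrix product $Q(\theta^{n-1}\omega) \cdots Q(\theta^{N-1}\omega)$ has no zero entries. Hence from the symbol $y_{n-1}$ one can reach any symbol at time $N$ via an admissible length-$M$ bridge, so for every $s \in \E_{\theta^N \omega}$ there exist intermediate symbols $u_0, \dots, u_{M-2}$ with
\begin{equation*}
z(s) := (y_0, \dots, y_{n-1}, u_0, \dots, u_{M-2}, s_0, s_1, \dots) \in C_n(y) \cap \sigma^{-N}(s) \cap \E_\omega.
\end{equation*}
The disintegration formula then gives
\begin{equation*}
\mu_\omega(C_n(y)) = \intf_{\E_{\theta^N \omega}} \sum_{z \in \sigma^{-N}(s) \cap \E_\omega \cap C_n(y)} F_N(\omega, z) \,d\mu_{\theta^N \omega}(s) \ge \intf_{\E_{\theta^N \omega}} F_N(\omega, z(s)) \,d\mu_{\theta^N \omega}(s),
\end{equation*}
since $F_N(\omega, z(s))$ is one of the terms in the inner sum.

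It remains to bound $F_N(\omega, \cdot)$ below by a positive constant on $\E_\omega$. Because $\psi, \ln H \in \mathbb{F}_Q$, both $\psi(\omega, \cdot)$ and $H(\omega, \cdot)$ are continuous on $\E_\omega$ with $H(\omega, \cdot) > 0$, so the numerator of $F_N(\omega, \cdot)$ is continuous and strictly positive. Since $b(\omega) \in \ZZ^+$ the fibre $X_\omega$ is finite, so the set $\sigma^{-N}(\sigma^N x) \cap \E_\omega$ indexing the denominator of $F_N(\omega, \cdot)$ has finite cardinality which is constant on the clopen cylinder $C_{N+1}(x) \cap \E_\omega$; the denominator is therefore a locally finite sum of continuous strictly positive terms, hence continuous and strictly positive. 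Compactness of $\E_\omega$ yields $m(\omega) := \min_{z \in \E_\omega} F_N(\omega, z) > 0$, so $\mu_\omega(C_n(y)) \ge m(\omega) > 0$ for $\omega \in T$, and integrating gives $\nu(C_n(y)) > 0$. The main technical point I expect is verifying the continuity of the denominator of $F_N$; this is routine once one exploits the finiteness and local constancy of the preimage set.
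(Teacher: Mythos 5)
Your proof is correct and follows the same overall strategy as the paper's (reduce to showing $\mu_\omega(C_n(y)) > 0$ pointwise on a positive-measure set, apply the Gibbs disintegration at depth about $n+M$, use \ref{g1} to ensure the bridging term exists, then bound $F_N$ below). The genuine difference lies in how you obtain a positive lower bound on $F_N(\omega,\cdot)$. The paper argues quantitatively: it uses the H\"older conditions on $\psi$ and $\ln H$ to show the distortion factors $L_{n+M}(\omega,u,z)\,H(\omega,u)/H(\omega,z)$ are uniformly bounded, deduces $F_{n+M}(\omega,z) \geq c_n\,\bigl(\card(\E_\omega\cap\sigma^{-(n+M)}(z))\bigr)^{-1}$, bounds the cardinality by $\prod_{i=0}^{n+M-1} b(\theta^i\omega)$, and finally integrates over $\Omega_y$ using $\ln b \in L^1(\PP)$ to guarantee the integrand is a.s.\ positive. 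You instead observe that $F_N(\omega,\cdot)$ is a strictly positive continuous function on the compact fiber $\E_\omega$ (the numerator is positive and continuous because $\psi(\omega,\cdot)$ and $H(\omega,\cdot) = \exp(\ln H(\omega,\cdot))$ are; the denominator is a finite sum, locally constant in its index set because it depends only on $x_N$ within $C_{N+1}(x)$, hence continuous and positive), so it attains a positive minimum $m(\omega)$. Both arguments are valid. Your route is more topological and avoids any explicit distortion estimate, trading explicitness for brevity: the paper's version gives a quantitative lower bound that depends measurably on $\omega$ via the product of the $b(\theta^i\omega)$, while yours just asserts pointwise positivity, which is all the lemma requires since a nonnegative measurable function positive on a set of positive measure has positive integral. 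One small note: your final display should read
\begin{equation*}
\mu_\omega(C_n(y)) \ge \intf_{\E_{\theta^N\omega}} F_N(\omega, z(s))\,d\mu_{\theta^N\omega}(s) \ge m(\omega)\,\mu_{\theta^N\omega}(\E_{\theta^N\omega}) = m(\omega),
\end{equation*}
and you should note that the inner sum is bounded below by $m(\omega)$ directly (which sidesteps any need to choose $z(s)$ measurably in $s$). The choice of depth $N=n+M-1$ versus the paper's $n+M$ is an immaterial bookkeeping difference; both work.
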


%%%%%%%%%%%%%%%%%%%%%%%%%%%
\subsection{Proofs for random subshifts}\label{sec:shift}

In this section we will prove Theorem~\ref{thm:main1} by applying Theorem \ref{thm:kallenberg}.
We fix some finite union of disjoint, bounded, open intervals $R=\cup_i R_i$ and apply the computations from Section \ref{sec:gal} with the sets $A=A_n$. We set $k_n=k_*(\omega,R,A_n)$ and $T_n(\omega)=T_\omega^{k_n}(A_n)$ for brevity.
For each $n \in \ZZ^+$ we fix a later-to-be-determined $h_n \ge 0$ and set $G_n(\omega), H_n(\omega), K_n(\omega)$ to be the quantities in Section~\ref{sec:gal} defined with gap $g = h_n+n$.
As per the discussion in Section~\ref{sec:gal} since $\eps(n) \to 0$ we have \ref{k1}, and to deduce \ref{k2} it suffices to prove that $G_n + H_n + K_n \to 0$ for $\mathbb{P}$-a.e. $\omega$.

\begin{lemma} \label{lem:GNas}
	For a.e. $\omega$ we have
	\begin{equation*}
		G_n(\omega) \le (\sup R) \left(h_n \beta_1(n) + \sum_{j=q_n}^n \beta_0(j)\right).
	\end{equation*}
	\begin{proof}
		We have
		\begin{equation*}
		    \mu_{\theta^i \omega}(A_n\cap\{\tau_{A_n} \le h_n+n\}) \le \sum_{j=q_n}^{h_n+n} \mu_{\theta^i \omega}(A_n\cap \sigma^{-j}A_n).
		\end{equation*}
		These intersections will be estimated differently according to whether $j\le n$ or not. Firstly, if $j \le n$ then by \ref{sft1} we have
		\begin{equation*}
      \mu_{\theta^i \omega}(A_n\cap \sigma^{-j}A_n) \le \mu_{\theta^i \omega}(A_n) \beta_0(j).
    \end{equation*}
		Alternatively, if $n<j\le h_h+n$ then by \ref{sft1} we have
		\begin{equation*}
      \mu_{\theta^i \omega}(A_n\cap \sigma^{-j}A_n) \le \mu_{\theta^i \omega}(A_n)\beta_1(n).
    \end{equation*}
		Combining these two cases we get
		\begin{equation}\label{eq:GNas_1}
		    \mu_{\theta^i \omega}(A_n\cap\{\tau_{A_n} \le h_n+n\})
		    \le \mu_{\theta^i \omega}(A_n) \left(h_n \beta_1(n) + \sum_{j=q_n}^n \beta_0(j)\right).
		\end{equation}
    We obtain the required claim by summing \eqref{eq:GNas_1} over $i = 1, \dots, k_n$, recalling the definition of $T_n(\omega)$, and then noting that $T_n(\omega) \le \sup R$.
		\end{proof}
\end{lemma}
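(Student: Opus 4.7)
The goal is to bound $G_n(\omega) = \sum_{i=1}^{k_n} \mu_{\theta^i\omega}(A_n \cap \{\tau^{\theta^i\omega}_{A_n} \le h_n+n\})$. The plan is to bound each term of the sum individually by $\mu_{\theta^i\omega}(A_n)$ times the bracketed quantity in the conclusion, and then to recognize that $\sum_{i=1}^{k_n} \mu_{\theta^i\omega}(A_n) = T_n(\omega)$ is at most $\sup R$ by the definition of $k_n = k_*$.

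First I would unpack the hitting event via a union bound: since $\{\tau^{\theta^i\omega}_{A_n} \le h_n+n\} = \bigcup_{j=1}^{h_n+n} \sigma^{-j}(A_n)$, we have
\begin{equation*}
\mu_{\theta^i\omega}(A_n \cap \{\tau^{\theta^i\omega}_{A_n} \le h_n+n\}) \le \sum_{j=1}^{h_n+n}\mu_{\theta^i\omega}(A_n \cap \sigma^{-j}A_n).
\end{equation*}
By the very definition of $q_n$ the terms with $j < q_n$ vanish, so the summation may be restricted to $q_n \le j \le h_n+n$.

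Next I would split this range at $j = n$ and apply the two clauses of \ref{sft1} to the pieces: for $q_n \le j \le n$ the first clause gives $\mu_{\theta^i\omega}(A_n \cap \sigma^{-j}A_n) \le \mu_{\theta^i\omega}(A_n)\beta_0(j)$, while for $n < j \le h_n+n$ the second clause (applied with $k = j$) gives $\mu_{\theta^i\omega}(A_n \cap \sigma^{-j}A_n) \le \mu_{\theta^i\omega}(A_n)\beta_1(n)$. Since there are at most $h_n$ indices in the second range, summing yields the pointwise bound
\begin{equation*}
\mu_{\theta^i\omega}(A_n \cap \{\tau^{\theta^i\omega}_{A_n} \le h_n+n\}) \le \mu_{\theta^i\omega}(A_n)\left(h_n\beta_1(n) + \sum_{j=q_n}^n \beta_0(j)\right).
\end{equation*}

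Finally I would sum this inequality over $i = 1, \dots, k_n$. The bracketed factor is independent of $i$, so it pulls out, leaving $\sum_{i=1}^{k_n}\mu_{\theta^i\omega}(A_n) = T_\omega^{k_n}(A_n) = T_n(\omega)$. By the definition of $k_n = k_*$ in \eqref{eq:constants} one has $T_n(\omega) \le \sup R$, giving the lemma. The argument is essentially a bookkeeping exercise; there is no serious obstacle beyond keeping the two regimes of \ref{sft1} straight and invoking the convention that the inner sum over $j$ contributes zero for $j < q_n$.
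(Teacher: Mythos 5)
Your proof is correct and follows essentially the same route as the paper's: a union bound over the hitting time restricted to $j \ge q_n$, splitting the range at $j = n$ to apply the two clauses of \ref{sft1}, and then summing over $i$ and using $T_n(\omega) \le \sup R$. No gaps.
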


\begin{lemma}\label{lem:H_alpha}
  Suppose that \ref{sft3} holds with uniform mixing and rate $\alpha$. Then $H_n(\omega) = O(k_n^{1/p} \alpha(h_n))$ for $\mathbb{P}$-a.e. $\omega$.
  \begin{proof}
  For $i \in \{1, \dots, k_n\}$ we must bound $\Delta_{\theta^i\omega}(A_n,k_n-i,h_n+n)$.
  If $h_n + n \ge k_n-i$ then we are done, having obtained the trivial upper bound of 0, and so we assume otherwise for the remainder of the proof. For $I \subseteq \ZZ^+$ we set $F_n(I)$ to be $F_\omega^{A_n}(I)$ (from \eqref{eq:f_sets}).
  If $I \subseteq \ZZ^+ \cap (h_n + n , k_n-i]$ then $F_n(I) = \sigma^{-h_n -n}(F_n(I'))$ where $I' = I - h_n -n$.
  Since $F_n(I') \in \mathcal{F}_0^{k_n - i - h_n}$ by \ref{sft3} we find that
  \begin{equation*}\begin{split}
    &\abs{\mu_{\theta^i \omega}(C_n(y) \cap F_n(I)) - \mu_{\theta^i \omega}(A_n)\mu_{\theta^i \omega}(F_n(I))} \\
    & = \abs{\mu_{\theta^i \omega}(A_n \cap \sigma^{-h_n -n}F_n(I')) - \mu_{\theta^i \omega}(A_n)\mu_{\theta^{i +h_n +n} \omega}F_n(I')))} \\
    &\le D(\theta^i\omega)\alpha(h_n).
  \end{split}\end{equation*}
  Hence
  \begin{equation*}
  H_n(\omega)\le \alpha(h_n) \sum_{i=1}^{k_n} D(\theta^i\omega)
    \le \alpha(h_n) \left(\sum_{i=1}^{k_n} D(\theta^i\omega)^p\right)^{1/p}.
  \end{equation*}
  By the ergodic theorem we have $\sum_{i=1}^{k} D(\theta^i\omega)^p = O(k \mathbb E(D^p))$
  a.e. $\omega$ as $k\to\infty$, which yields the required claim.
  \end{proof}
\end{lemma}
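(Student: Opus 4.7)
The plan is to reduce $H_n(\omega)$ to a Birkhoff-type sum of $D(\theta^i \omega)$'s, weighted by the uniform mixing rate $\alpha(h_n)$. First I would fix $i \in \{1,\dots,k_n\}$ and try to bound $\Delta_{\theta^i \omega}(A_n, k_n - i, h_n + n)$ uniformly over admissible index sets. The key observation is that any $I \subseteq \ZZ^+ \cap (h_n + n, k_n - i]$ lies strictly beyond the first $h_n + n$ coordinates, so that $F_{\theta^i \omega}^{A_n}(I) = \sigma^{-(h_n+n)}(B)$ for some $B = F_{\theta^{i + h_n + n}\omega}^{A_n}(I - h_n - n)$, which is measurable with respect to $\F_0^m$ for an appropriate $m$. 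Using the $\mathbb{P}$-a.e. equivariance of the sample measures (i.e.\ $\omega \in \Omega'$), one rewrites $\mu_{\theta^i \omega}(F_{\theta^i\omega}^{A_n}(I)) = \mu_{\theta^{i + h_n + n}\omega}(B)$, so that the quantity inside the supremum matches exactly the shape appearing in the uniform fibered mixing bound of \ref{sft3} with gap $g = h_n$ (since $A_n \in \F_0^n$ and the total shift is $h_n + n$).

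The second step is then to apply \ref{sft3}, which gives
\[
\abs{\mu_{\theta^i \omega}(A_n \cap F_{\theta^i\omega}^{A_n}(I)) - \mu_{\theta^i \omega}(A_n)\mu_{\theta^i \omega}(F_{\theta^i\omega}^{A_n}(I))} \le D(\theta^i \omega)\alpha(h_n),
\]
uniformly over $I$, and hence $\Delta_{\theta^i \omega}(A_n, k_n - i, h_n + n) \le D(\theta^i \omega)\alpha(h_n)$. Summing over $i$ yields $H_n(\omega) \le \alpha(h_n) \sum_{i=1}^{k_n} D(\theta^i \omega)$.

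The last step controls the Birkhoff sum of $D$. Since we only know $D \in L^p(\PP)$ for some $p \in (0,1]$, I cannot directly apply the ergodic theorem to $D$ itself. Instead I would exploit the subadditivity of $t \mapsto t^p$ on $[0,\infty)$ for $p \le 1$, which gives $(\sum_i a_i)^p \le \sum_i a_i^p$ for nonnegative $a_i$, hence $\sum_{i=1}^{k_n} D(\theta^i\omega) \le \bigl(\sum_{i=1}^{k_n} D(\theta^i\omega)^p\bigr)^{1/p}$. Now $D^p \in L^1(\PP)$, so Birkhoff's ergodic theorem (applicable since $k_n \to \infty$ a.s.\ as $\eps(A_n) \to 0$) gives $\sum_{i=1}^{k_n} D(\theta^i \omega)^p = k_n \, \EE(D^p) + o(k_n)$ a.s., yielding $\sum_{i=1}^{k_n} D(\theta^i \omega) = O(k_n^{1/p})$ and hence the desired bound $H_n(\omega) = O(k_n^{1/p}\alpha(h_n))$.

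The main subtlety I expect is the $p < 1$ case: one must resist the temptation to invoke the ergodic theorem directly on $D$, and instead pass through $D^p$ via the subadditivity inequality. The rest is bookkeeping: verifying that the set $I$ lies in the right range so that \ref{sft3} applies with gap exactly $h_n$, and noting that the case $h_n + n \ge k_n - i$ is vacuous (since then $\Delta = 0$ by convention).
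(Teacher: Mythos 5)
Your proposal is correct and follows essentially the same route as the paper's proof: reduce $\Delta_{\theta^i\omega}(A_n,k_n-i,h_n+n)$ to a single application of the uniform mixing bound with gap $h_n$ (using equivariance to rewrite the factored term), sum over $i$, pass from $\sum_i D(\theta^i\omega)$ to $(\sum_i D(\theta^i\omega)^p)^{1/p}$ via the subadditivity of $t\mapsto t^p$, and finally invoke Birkhoff's ergodic theorem for $D^p\in L^1(\PP)$. The one point you flag as subtle — not applying the ergodic theorem directly to $D$ when $p<1$ — is precisely the move the paper makes as well.
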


\begin{lemma}\label{lem:H_phi}
  Suppose that \ref{sft3} holds with $\phi$-mixing. Then $H_n(\omega) \le (\sup R)\phi(h_n)$ for $\mathbb{P}$-a.e. $\omega$.
  \begin{proof}
  We argue as in the proof of the previous lemma to reduce to the case where $h_n + n < k_n - i$, and then observe that if $I \subseteq \ZZ^+ \cap (h_n + n , k_n-i]$ then by \ref{sft3} one has
  \begin{equation*}
    \abs{\mu_{\theta^i \omega}(A_n \cap F_n(I)) - \mu_{\theta^i \omega}(A_n)\mu_{\theta^i \omega}(F_n(I))}
    \le \mu_{\theta^i \omega}(A_n)\phi(h_n).
  \end{equation*}
  Hence
  \begin{equation*}
    H_n(\omega) \le \phi(h_n) \sum_{i=1}^{k_n} \mu_{\theta^i \omega}(A_n) = \phi(h_n) T_n(\omega).
  \end{equation*}
  By definition we have $T_n(\omega) \le \sup R$, which yields the required claim.
  \end{proof}
\end{lemma}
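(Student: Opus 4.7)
The plan is to mirror the argument used for Lemma \ref{lem:H_alpha}, but to exploit the key structural advantage of the $\phi$-mixing hypothesis: its error bound already carries a factor of $\mu_\omega(A)$, so summing the pointwise estimate over $i$ reproduces $T_n(\omega)$ directly, which the definition of $k_n$ keeps below $\sup R$. In particular, no appeal to the ergodic theorem will be needed, in contrast to the $\alpha$-mixing case where the $L^p$ random constant $D$ required averaging.

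First I would unfold $H_n(\omega) = \sum_{i=1}^{k_n}\Delta_{\theta^i\omega}(A_n, k_n - i, h_n + n)$ and discard the indices for which $h_n + n \ge k_n - i$, since the corresponding summand vanishes by definition of $\Delta$. For each remaining $i$, the task is to control
\[
\bigl|\mu_{\theta^i\omega}(A_n \cap F_n(I)) - \mu_{\theta^i\omega}(A_n)\mu_{\theta^i\omega}(F_n(I))\bigr|
\]
uniformly over $I \subseteq \ZZ^+ \cap (h_n + n, k_n - i]$. The shift identity $F_n(I) = \sigma^{-(h_n+n)}(F_n(I'))$ with $I' = I - (h_n + n)$ rewrites $F_n(I)$ as the preimage of a cylinder event living at base point $\theta^{i + h_n + n}\omega$. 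Since $A_n \in \mathcal{F}_0^n$ and the two events are then separated by $h_n$ coordinates beyond the first $n$, hypothesis \ref{sft3} in its $\phi$-mixing form gives the bound $\mu_{\theta^i\omega}(A_n)\phi(h_n)$.

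Taking the supremum over admissible $I$ yields $\Delta_{\theta^i\omega}(A_n, k_n - i, h_n + n) \le \mu_{\theta^i\omega}(A_n)\phi(h_n)$, after which summing over $i$ produces $H_n(\omega) \le \phi(h_n)\sum_{i=1}^{k_n}\mu_{\theta^i\omega}(A_n) = \phi(h_n)T_n(\omega) \le \phi(h_n)\sup R$ by the definition of $T_n$ and $k_n$. The only step requiring real care is the index bookkeeping around the shift, namely checking that after the translation $I \mapsto I'$ the resulting event $F_n(I')$ indeed sits in the expected cylinder algebra relative to the correct base point so that the $\phi$-mixing inequality applies with gap exactly $h_n$; once that is verified the remainder of the argument collapses mechanically.
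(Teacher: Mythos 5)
Your proposal is correct and follows essentially the same line as the paper's proof: you reuse the shift/translation bookkeeping from the $\alpha$-mixing case, invoke the $\phi$-mixing form of \ref{sft3} to get a bound carrying the factor $\mu_{\theta^i\omega}(A_n)$, and then sum to obtain $\phi(h_n)T_n(\omega)\le\phi(h_n)\sup R$ without any appeal to the ergodic theorem. Your observation that this is the structural difference from Lemma \ref{lem:H_alpha} is exactly the point the paper is making.
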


Lastly, in the present setting Lemma~\ref{lem:KisOK} may be rewritten as follows.
\begin{lemma} \label{lem:KNas}
  We have $K_n(\omega) \le (\sup R)(h_n+n) \eps(n)$.
\end{lemma}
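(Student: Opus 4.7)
The statement is a direct specialization of the general Lemma~\ref{lem:KisOK} to the setting of this section, so the plan is essentially to match notation and invoke the earlier bound. Concretely, $K_n(\omega)$ is defined in Section~\ref{sec:shift} as $K_{A_n,k_n,g}(\omega)$ with the choice $g = h_n + n$, while $\eps(n) = \esssup_\omega \mu_\omega(A_n)$ from \ref{sft2} coincides with $\eps(A_n)$ from Section~\ref{sec:gal}. So the lemma follows immediately by applying Lemma~\ref{lem:KisOK} with $A = A_n$, $k_\ast = k_n$, and $g = h_n + n$.

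For completeness I would briefly recall the mechanism of the underlying bound: the hitting time estimate
\[
\mu_{\theta^i\omega}(\tau_{A_n}^{\theta^i\omega} \le h_n + n) \le \sum_{j=1}^{h_n+n} \mu_{\theta^{i+j}\omega}(A_n) \le (h_n+n)\eps(n),
\]
obtained from the union bound, equivariance, and the definition of $\eps(n)$, is inserted into the sum defining $K_n(\omega)$:
\[
K_n(\omega) = \sum_{i=1}^{k_n} \mu_{\theta^i\omega}(A_n)\, \mu_{\theta^i\omega}(\tau_{A_n}^{\theta^i\omega} \le h_n + n) \le (h_n+n)\eps(n) \sum_{i=1}^{k_n} \mu_{\theta^i\omega}(A_n).
\]
The remaining sum is $T_n(\omega) = T_\omega^{k_n}(A_n)$, and by the definition of $k_n = k_\ast(\omega, R, A_n)$ in \eqref{eq:constants} this is bounded by $\sup R$. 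Combining these two estimates yields the claim.

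There is no real obstacle here: the only thing to check is that the quantities $\eps(n)$, $k_n$, and $g = h_n + n$ introduced in Section~\ref{sec:shift} are indeed the ones appearing in Lemma~\ref{lem:KisOK}, which is a bookkeeping matter. Accordingly I would state the lemma as a one-line corollary of Lemma~\ref{lem:KisOK}.
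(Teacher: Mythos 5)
Your proposal is correct and matches the paper exactly: the paper gives no proof of Lemma~\ref{lem:KNas} at all, introducing it as ``Lemma~\ref{lem:KisOK} may be rewritten as follows,'' i.e.\ precisely the specialization $A = A_n$, $k_* = k_n$, $g = h_n + n$, $\eps(A) = \eps(n)$ that you identify. Your brief recap of the mechanism (union bound on the hitting time, then $\sum_{i=1}^{k_n}\mu_{\theta^i\omega}(A_n) = T_n(\omega) \le \sup R$) also reproduces the proof of Lemma~\ref{lem:KisOK} faithfully.
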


We are now in a position to finish the proof of Theorem \ref{thm:main1}.

\begin{proof}[Proof of Theorem~\ref{thm:main1}]
  Recall from our application of Section \ref{sec:gal} that \ref{k1} holds by Lemma \ref{lem:K1_proof}, and hence to prove that $\mathcal{N}_{A_n}(\omega, \cdot)$ converges to a uniform Poisson point process it suffices to verify \ref{k2}.
  By Lemmas~\ref{lem:GNas}, \ref{lem:H_alpha}, \ref{lem:H_phi} and \ref{lem:KNas} we have
  \begin{equation*}
    G_n+H_n+K_n = O\left(h_n \beta_1(n) + \sum_{j=q_n}^n \beta_0(j) + \min(\alpha(h_n)k_n^{\frac1p} , \phi(h_n)) + (h_n+n)\eps(n)\right).
  \end{equation*}
  Hence $G_n+H_n+K_n\to0$ provided that an appropriate sequence $\{h_n\}_{n \in \ZZ^+}$ exists, but such a sequence is guaranteed to exist by~\eqref{eq:zero}.
  The property \ref{k2} then follows from Lemmas~\ref{lem:delta}, \ref{lem:exp} and \ref{lem:somme}.
  Thus $\mathcal{N}_{A_n}(\omega, \cdot)$ converges to a uniform Poisson point process as claimed.
  We will now prove \eqref{eq:cvinlaw}.
  For each $n \in \ZZ^+$ let $f_n : [0, \infty) \to [0,\infty)$ be defined by
  \begin{equation*}
    d_n(s) = \mu_{\omega}(x \in X : \mathcal{N}_{A_n}(\omega, x)([0, s]) = 0).
  \end{equation*}
  Since $\mathcal{N}_{A_n}(\omega, \cdot)$ converges to a uniform Poisson point process we have for every $s \in [0,\infty)$ that
  \begin{equation}\label{eq:pt_hitting_time_prob_conv}
    \lim_{n \to \infty} d_n(s) = e^{-s}.
  \end{equation}
  As each $d_n$ is decreasing and $s \mapsto e^{-s}$ is continuous and decreasing we may conclude that that the convergence in \eqref{eq:pt_hitting_time_prob_conv} is uniform. The argument is a straightforward modification of a well-known one; we include it because it is elementary, short and for lack of an exact reference.
  Since $s \mapsto e^{-s}$ is continuous and decreasing on $[0, \infty)$, for any $\epsilon > 0$ there exists points $0 \le y_0 < \dots < y_m = \infty$ such that $e^{-y_i} - e^{-y_{i+1}} < \epsilon/2$ for every $i \in \{1, \dots, m\}$.
  Fix $s \in [0, \infty)$ and let $i$ be such that $s \in [y_i, y_{i+1})$. If $d_n(s) > e^{-s}$ then as $d_n$ and $s \mapsto e^{-s}$ are decreasing one has
  \begin{equation*}
    \abs{d_n(s) - e^{-s}} \le d_n(y_i) - e^{-y_{i+1}} \le \abs{d_n(y_i) - e^{-y_i}} + \frac{\epsilon}{2} \le \sup_{1 \le j \le m} \abs{d_n(y_j) - e^{-y_j}} + \frac{\epsilon}{2}.
  \end{equation*}
  One obtains the same bound in the case where $d_n(s) \le e^{-s}$ by a similar argument.
  If $n$ is large enough so that $\abs{d_n(y_j) - e^{-y_j}} < \epsilon/2$ for each $j \in \{1, \dots, m\}$ then it follows that
  \begin{equation*}
    \sup_{s \in [0, \infty)} \abs{d_n(s) - e^{-s}} < \epsilon,
  \end{equation*}
  and so the convergence in \eqref{eq:pt_hitting_time_prob_conv} is uniform.
  Hence
  \begin{equation*}
    \lim_{n \to \infty} \sup_{k \ge 0} \abs{d_n\left(T^k_\omega(A_n)\right) - \left(-\sum_{i=1}^k\mu_{\theta^i\omega} (A_n)\right)} = 0.
  \end{equation*}
  The convergence in \eqref{eq:cvinlaw} then follows from the observation that
  \begin{equation*}\begin{split}
    d_n\left(T^k_\omega(A_n)\right) &= \mu_{\omega}\left(x \in X : \mathcal{N}_{A_n}(\omega, x)([0, T^k_\omega(A_n)]) = 0 \right) \\
    &= \mu_{\omega}\left(x \in X : \tau_{A_n}(x) > k\right).
  \end{split}\end{equation*}
\end{proof}

\subsection{Proofs for random Gibbs measures}\label{sec:gibbs}

In this section we will prove Theorem \ref{thm:gibbs} and Lemma \ref{lemma:pos_measure}, starting with the former. In view of Theorem \ref{thm:main1} it suffices to exhibit \ref{sft1}, \ref{sft2}, and \ref{sft3} in order to prove Theorem \ref{thm:gibbs}.
We begin by recalling a uniform quenched decay of correlations result from \cite{morris2006topics}, which strengthens the conclusions of \cite{boggun} and yields \ref{sft3}.

\begin{proposition}\label{prop:quenched_decay}
  There exists $K > 0$ and $\rho \in [0,1)$ such that for every $n,m,k \in \ZZ^+$, $A \in \mathcal{F}^n_0$, $B \in \mathcal{F}^m_0$ we have $\mathbb{P}$-a.s. that
  \begin{equation*}
    \abs{\mu_{\omega}(A \cap \sigma^{-n-k}(B)) - \mu_{\omega}(A) \mu_{\theta^{n+k}\omega}(B)}  \le \mu_{\omega}(A)K\rho^k.
  \end{equation*}
  Hence \ref{sft3} holds with fibered $\phi$-mixing where $\phi(k) = O(\rho^k)$.
  \begin{proof}
    The proof is the same as \cite[Corollary 2.3.7]{morris2006topics} with \cite[Lemma 2.3.5]{morris2006topics} used in place of \cite[Proposition 2.3.6]{morris2006topics}.
  \end{proof}
\end{proposition}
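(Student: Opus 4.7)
The plan is to establish a uniform fiberwise spectral gap for the normalised random transfer operators associated to $\psi$.  Define $\mathcal{L}_\omega : C(\E_\omega) \to C(\E_{\theta\omega})$ by
\begin{equation*}
(\mathcal{L}_\omega f)(x) = \sum_{y \in \sigma^{-1}(x) \cap \E_\omega} e^{\psi(\omega, y)} f(y),
\end{equation*}
and let $h_\omega>0$ and $\lambda_\omega>0$ denote the top eigenfunctions and eigenvalues produced by the construction in \cite{boggun}.  Setting $P_\omega f := \lambda_\omega^{-1} h_{\theta\omega}^{-1} \mathcal{L}_\omega(h_\omega f)$, one has $P_\omega 1 = 1$, a short computation from Definition \ref{def:gibbs} gives the duality $\intf P^n_\omega f\,d\mu_{\theta^n\omega} = \intf f\,d\mu_\omega$ for every $n \ge 0$ and $f\in C(\E_\omega)$, and the pull-through identity $P^n_\omega(f \cdot (g\circ\sigma^n)) = g \cdot P^n_\omega f$ holds.

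For $A \in \F_0^n$, $B \in \F_0^m$ and $k \ge 1$, these two properties combine to yield
\begin{equation*}
\mu_\omega(A\cap\sigma^{-n-k}B) - \mu_\omega(A)\mu_{\theta^{n+k}\omega}(B)
= \intf 1_B \bigl( P^k_{\theta^n\omega} g_\omega - \mu_\omega(A) \bigr)\, d\mu_{\theta^{n+k}\omega},
\end{equation*}
where $g_\omega := P^n_\omega 1_A$ has total $\mu_{\theta^n\omega}$-mass equal to $\mu_\omega(A)$.  Since $|1_B|\le 1$, the proposition reduces to the uniform bound
\begin{equation*}
\norm{P^k_{\theta^n\omega} g_\omega - \mu_\omega(A)}_\infty \le K \rho^k \mu_\omega(A)
\end{equation*}
with $K$ and $\rho \in [0,1)$ independent of $A$, $n$, $k$ and $\omega$.

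I would obtain this by a projective Birkhoff cone argument.  For each $\omega$ introduce a cone $\mathcal{K}_\omega \subset C(\E_\omega)$ of strictly positive functions whose log-oscillation across any $j$-cylinder is dominated by a fixed geometric sequence $C r^j$, with $r\in(0,1)$ inherited from the H\"older data \eqref{eq:holder_obs} of $\psi$ and of $\ln h_\omega$.  Condition \ref{g2} supplies the integrability of the random normalising factors $\lambda_\omega$ and $h_\omega$, and the uniform aperiodicity \ref{g1} forces $P^M_\omega$ to map $\mathcal{K}_\omega$ strictly inside $\mathcal{K}_{\theta^M\omega}$ with image of projective diameter bounded uniformly in $\omega$.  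Birkhoff's theorem then yields a fiberwise Hilbert-metric contraction rate $\rho \in [0,1)$, uniform in $\omega$ and the seed function, which via the standard comparison between Hilbert metric and sup-norm translates to exponential convergence of $P^k_{\theta^n\omega} g_\omega$ toward its mean $\mu_\omega(A)$.

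The main obstacle is the entry step: showing that $P^n_\omega 1_A / \mu_\omega(A)$ already lies in a uniformly bounded ball of $\mathcal{K}_{\theta^n\omega}$ in the Hilbert metric, uniformly over $A\in\F_0^n$, $n$ and $\omega$.  The Gibbs-type estimate $\mu_\omega(A) \asymp e^{\sum_{i=0}^{n-1}\psi(\theta^i\omega,\sigma^i x)}/(\lambda_\omega \cdots \lambda_{\theta^{n-1}\omega})$ for $x\in A$, together with the H\"older control of $\psi$ and $\ln h_\omega$, should make $P^n_\omega 1_A$ comparable on each cylinder of $\E_{\theta^n\omega}$ to the constant $\mu_\omega(A)$ with bounded distortion, placing it in $\mathcal{K}_{\theta^n\omega}$ at uniformly bounded diameter.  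Combined with the contraction this delivers the required estimate, and the fibered $\phi$-mixing statement with $\phi(k) = O(\rho^k)$ follows immediately.
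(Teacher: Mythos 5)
Your proposal reconstructs the projective cone (Hilbert-metric) contraction argument for the normalised random Ruelle operator, which is exactly the mechanism behind the cited result in Morris's thesis (following Bogensch\"utz--Gundlach). The normalisation $P_\omega 1 = 1$, the duality $\intf P_\omega^n f\,d\mu_{\theta^n\omega}=\intf f\,d\mu_\omega$ (which holds because $H$ is the random eigenfunction in Definition \ref{def:gibbs}), the pull-through identity, and the resulting reduction of the correlation difference to $\norm{P^k_{\theta^n\omega}g_\omega - \mu_\omega(A)}_\infty$ are all correct and are the same reduction the reference performs.

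One point needs more care than your sketch gives it. You want $g_\omega = P^n_\omega 1_A$ to lie in a uniformly bounded ball of the positive cone $\mathcal K_{\theta^n\omega}$, but $g_\omega$ can vanish on part of $\E_{\theta^n\omega}$: for $A \in \F_0^n$ a union of $n$-cylinders, $g_\omega(x)=0$ whenever no cylinder of $A$ admits a transition to $x_0$ under $Q(\theta^{n-1}\omega)$. Such a $g_\omega$ is at infinite Hilbert distance from the cone interior, so the ``entry step'' as stated is false. The standard fix is to treat $k<M$ trivially (by enlarging $K$) and for $k\ge M$ apply the aperiodicity \ref{g1}: $P^M_{\theta^n\omega}g_\omega$ is strictly positive, and the bounded-distortion estimate (via \eqref{eq:holder_obs} for $\psi$ and $\ln H$, together with the aperiodicity giving a uniform lower bound on the $M$-step weights) then places $P^M_{\theta^n\omega}g_\omega/\mu_\omega(A)$ in a uniformly bounded Hilbert ball. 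With that correction your argument goes through and yields the proposition, with $K,\rho$ uniform in $\omega$ thanks to the uniformity in \ref{g1}.
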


We will now pursue \ref{sft2}, which will be obtained as a corollary to the following result.
\begin{proposition}\label{prop:exp_decay_cylinder}
  There exists $c,a > 0$ such that for all $n \in \ZZ^+$
  \begin{equation*}
    \esssup_{\omega} \sup_{x \in X} \mu_{\omega}(C_{n}(x)) \le ce^{-an}.
  \end{equation*}
\end{proposition}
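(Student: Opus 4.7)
The plan is to combine the quenched decay of correlations from Proposition~\ref{prop:quenched_decay} with a one-step uniform upper bound on cylinder masses, bootstrapping the latter to exponential decay by iterating a geometric contraction. The key structural trick is to place the \emph{long} cylinder in the past position and a \emph{short} cylinder in the future position of the $\phi$-mixing decomposition, so that the iteration is purely multiplicative and no persistent additive error accumulates.

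First, I would establish the one-step bound: there exist $m_0 \in \ZZ^+$ and $\eta \in (0,1)$ such that $\mu_\omega(C_{m_0}(x)) \le \eta$ for every $x \in X$ and $\mathbb{P}$-a.e.\ $\omega$. This is the genuine input from the Gibbs structure and is the main obstacle; Proposition~\ref{prop:quenched_decay} cannot be used here, since bounding individual cylinder measures by a constant $<1$ is precisely what is needed in order to iterate decay of correlations. The argument should combine (i) aperiodicity~\ref{g1} together with $b \ge 2$ a.s., which guarantee at least two admissible $m_0$-cylinders in $\E_\omega$ for any $m_0 \ge 1$, with (ii) a uniform Gibbs \emph{lower} bound on cylinders of the fixed length $m_0$, extracted from the construction in \cite{boggun}. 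Since $\sum_x \mu_\omega(C_{m_0}(x)) = 1$ and at least two summands are uniformly bounded below, the maximal summand is bounded above by $1 - c' =: \eta$ for some uniform $c' > 0$.

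Next, I would fix $g \in \ZZ^+$ with $K\rho^g \le (1-\eta)/2$, set $M := m_0 + g$ and $\tau := (1+\eta)/2 \in (\eta, 1)$, where $K, \rho$ are the constants from Proposition~\ref{prop:quenched_decay}. The elementary inclusion
\[
C_n(x) \;\subseteq\; C_{n-M}(x) \cap \sigma^{-(n-m_0)} C_{m_0}(\sigma^{n-m_0}x) \qquad (n \ge M)
\]
holds because the middle $g$ coordinates are unconstrained on the right-hand side. Applying Proposition~\ref{prop:quenched_decay} with $A = C_{n-M}(x) \in \F_0^{n-M}$, $B = C_{m_0}(\sigma^{n-m_0}x) \in \F_0^{m_0}$ and gap $k = g$, and then invoking the Step~1 bound at the base point $\theta^{n-m_0}\omega$, yields
\[
\mu_\omega(C_n(x)) \le \mu_\omega(C_{n-M}(x)) \bigl[ \mu_{\theta^{n-m_0}\omega}(C_{m_0}(\sigma^{n-m_0}x)) + K\rho^g \bigr] \le \tau\, \mu_\omega(C_{n-M}(x)).
\]
Setting $a_n := \esssup_\omega \sup_{x \in X} \mu_\omega(C_n(x))$, this reads $a_n \le \tau\, a_{n-M}$ for all $n \ge M$, which iterates to $a_n \le \tau^{\lfloor n/M \rfloor} \le \tau^{-1} e^{-an}$ with $a := -(\log\tau)/M > 0$, giving the claim.

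The main obstacle is Step~1, which requires genuine Gibbs input (via \cite{boggun}) and cannot be extracted from the $\phi$-mixing bound alone. A secondary point worth noting is the choice to put the long cylinder in the past-side factor $A$: the mirror decomposition (short cylinder first, then a gap, then the long cylinder) would place the long cylinder on the \emph{additive} side of the $\phi$-mixing bound, producing a persistent additive error in the iteration that would only drive $a_n$ to a constant of size $\sim K\rho^g/(1-\eta)$ rather than to zero.
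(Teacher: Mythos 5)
Your Step~2 bootstrap is correct and genuinely different from the paper's argument. The inclusion $C_n(x) \subseteq C_{n-M}(x) \cap \sigma^{-(n-m_0)}C_{m_0}(\sigma^{n-m_0}x)$ is right, Proposition~\ref{prop:quenched_decay} then gives $a_n \le (\eta + K\rho^g)\,a_{n-M}$, and your remark about placing the long cylinder on the multiplicative side of the $\phi$-mixing bound is exactly the right observation. By contrast, the paper never invokes Proposition~\ref{prop:quenched_decay} for this result; it works entirely within the Gibbs machinery, showing (Lemma~\ref{lemma:cylinder_less_than_1} plus submultiplicativity of $L_n$) that the Gibbs weight satisfies $F_{kM}(\omega,\cdot) \le C(1+C')^{-k}$, whence $\mu_\omega(C_{kM}(x)) \le C(1+C')^{-k}$ in one stroke.

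Your Step~1, however, has a genuine gap as sketched. You propose to get $\mu_\omega(C_{m_0}(x)) \le \eta < 1$ by exhibiting a uniform \emph{lower} bound on $m_0$-cylinder measures and then invoking normalization. But the standing assumption is only $\ln b \in L^1(\mathbb{P})$, so the alphabet size $b(\omega)$ may be unbounded, and $\essinf_\omega \inf_{x\in\E_\omega}\mu_\omega(C_{m_0}(x)) > 0$ fails in general: the Gibbs lower bound one can extract is of the form $c_{m_0}\prod_{i=0}^{m_0+M-1}b(\theta^i\omega)^{-1}$, which degenerates along orbits where $b$ is large. This is exactly the quantity that appears in the proof of Lemma~\ref{lemma:pos_measure}, where it is controlled only after integrating in $\omega$; and the example in Section~\ref{sec:examples} explicitly notes that the $\essinf$ lower bound is available there \emph{because} $b = 3$ a.s.

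The correct route to your one-step claim is a direct bound on the Gibbs weight, not on cylinder measures. Writing $F_M(\omega,z) = \bigl(\sum_{u \in \E_\omega \cap \sigma^{-M}(\sigma^M z)} L_M(\omega,u,z)\,H(\omega,u)/H(\omega,z)\bigr)^{-1}$, the sum always contains the term $u=z$ (contributing exactly $1$), while aperiodicity \ref{g1} together with $b \ge 2$ a.s. guarantee at least one further $u' \ne z$ in the fiber; the H\"older bound \eqref{eq:holder_obs} applied with $n=0$ to $\psi$ and to $\ln H$ gives $L_M(\omega,u',z)H(\omega,u')/H(\omega,z) \ge e^{-Ma-a'}$ uniformly, so $F_M(\omega,z) \le (1+e^{-Ma-a'})^{-1} =: \eta < 1$ and hence $\mu_\omega(C_M(x)) = \int_{I_M(\omega,x)}F_M\,d\mu_{\theta^M\omega} \le \eta$. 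This is essentially the content of Lemma~\ref{lemma:cylinder_less_than_1}. Once you have it, your $\phi$-mixing bootstrap does finish the proof --- but note that at that point, iterating the partition-sum bound directly, as the paper does, already yields the exponential decay, making the mixing detour valid but unnecessary.
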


To prove Proposition \ref{prop:exp_decay_cylinder} we will use the Gibbs property of $\mu$ and a submultiplicativity property of the functions $F_n$ (from Definition \ref{def:gibbs}).
To this end we introduce real-valued functions $L_n$ defined by
\begin{equation*}
  \forall \omega \in \Omega, a,b \in \E_\omega \quad L_n(\omega, a,b) = \exp\left(\sum_{i=0}^{n-1}\psi(\theta^i\omega, \sigma^i a) - \psi(\theta^i\omega, \sigma^i b)\right).
\end{equation*}
Before beginning the proof of Proposition \ref{prop:exp_decay_cylinder} we introduce some handy notation.
If $x, s \in X$ and $n \in \ZZ^+$ then we denote the sequence $(x_0, \dots, x_{n-1}, s_0, s_1, \dots)$ by $[x]_0^n s$.
For $x \in X$, $\omega \in \Omega$ and $n \in \ZZ^+$ we denote by $I_n(\omega, x)$ the set $\{ s \in \E_{\theta^n \omega} :[x]_0^n s \in \E_\omega \} = \sigma^n(\E_{\omega} \cap C_n(x))$.

\begin{lemma}\label{lemma:cylinder_less_than_1}
  There is some $C > 0$ such that
  \begin{equation*}
    \esssup_{\omega} \sup_{x \in \E_{\omega}} \sup_{s \in I_M(\omega, x)} \left(\sum_{u \in \E_\omega \cap \sigma^{-M}(s)} L_M(\omega, u,[x]_0^Ms)\right)^{-1} < (1+C)^{-1}.
  \end{equation*}
  \begin{proof}
    For $x \in \E_\omega$ and $s \in I_M(\omega, x)$ we have
    \begin{equation*}
      \sum_{u \in \E_\omega \cap \sigma^{-M}(s)} L_M(\omega, u, [x]_0^Ms) = 1 + \sum_{\substack{u \in \E_\omega \cap \sigma^{-M}(s)\\ u \ne x}} \exp\left(\sum_{i=0}^{M-1}\psi(\theta^i\omega, \sigma^i u) - \psi(\theta^i\omega, \sigma^i [x]_0^M s)\right).
    \end{equation*}
    By the aperiodicity assumption \ref{g1}, for every $t \in \{ 1, \dots, b(\omega)\}$ there exists $u^t \in \E_\omega$ such that $u^t_0 = t$ and $\sigma^M(u^t) = s$.
    Since $b(\omega) \ge 2$ a.s. it follows that for a.e. $\omega$ there exists $u \in \E_\omega \cap \sigma^{-M}(s)$ such that $u \ne x$ (because we can demand that $u_0$ and $x_0$ are different).
    Using the fact that $\psi$ satisfies \eqref{eq:holder_obs} we deduce that there exists $C > 0$ such that for a.e. $\omega$ and every $x \in \E_\omega$, $s\in I_M(\omega, x)$, and $u \in  \E_\omega \cap \sigma^{-M}(s)$ one has
    \begin{equation*}
      \exp\left(\sum_{i=0}^{M-1}\psi(\theta^i\omega, \sigma^i u) - \psi(\theta^i\omega, \sigma^i [x]_0^M s)\right) \ge C.
    \end{equation*}
    Hence for a.e. $\omega$ we have
    \begin{equation*}
      \sum_{u \in \E_\omega \cap \sigma^{-M}(s)} L_M(\omega, u, [x]_0^Ms) \ge 1 + C
    \end{equation*}
    uniformly in $u,x$ and $s$. The required claim follows.
  \end{proof}
\end{lemma}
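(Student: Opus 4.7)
The plan is to bound the sum
\[
\Sigma(\omega,x,s) := \sum_{u \in \E_\omega \cap \sigma^{-M}(s)} L_M(\omega, u, [x]_0^M s)
\]
from below by $1+C$ for some fixed $C>0$, uniformly in $\omega, x, s$. First I isolate the ``diagonal'' preimage: setting $v := [x]_0^M s$, the assumption $s \in I_M(\omega, x)$ ensures $v \in \E_\omega$, and $\sigma^M v = s$ by construction. Thus $v \in \E_\omega \cap \sigma^{-M}(s)$ contributes the term $L_M(\omega,v,v) = 1$ to $\Sigma$, so $\Sigma \ge 1$ for free. The task reduces to producing at least one further admissible preimage $u \ne v$ whose $L_M$-contribution is bounded away from $0$ uniformly.

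Existence of such a second preimage comes from the aperiodicity condition \ref{g1}: the product $Q(\omega)\cdots Q(\theta^{M-1}\omega)$ has strictly positive entries, so for any starting symbol $t \in X_\omega$ there is an admissible word of length $M$ joining $t$ to $s_0 \in X_{\theta^M\omega}$. Using $b(\omega) \ge 2$ almost surely, I choose $t \neq x_0$, producing $u \in \E_\omega \cap \sigma^{-M}(s)$ with $u_0 = t \ne x_0 = v_0$, hence $u \ne v$.

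The crux is a uniform lower bound on $L_M(\omega,u,v)$. The key observation is that \eqref{eq:holder_obs} applied at $n=0$ (where $C_0(y) = X$) yields the uniform fiberwise oscillation bound
\[
\sup\bigl\{\abs{\psi(\omega', x') - \psi(\omega', y')} : x', y' \in \E_{\omega'}\bigr\} \le a \qquad \mathbb{P}\text{-a.s.}
\]
Since $\sigma^i u, \sigma^i v \in \E_{\theta^i\omega}$ for each $i \in \{0,\ldots,M-1\}$, applying this bound to each term and summing gives
\[
\abs{\ln L_M(\omega, u, v)} = \abs{\sum_{i=0}^{M-1}\bigl(\psi(\theta^i\omega, \sigma^i u) - \psi(\theta^i\omega, \sigma^i v)\bigr)} \le aM,
\]
so $L_M(\omega, u, v) \ge e^{-aM}$ uniformly. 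Combined with the diagonal contribution, $\Sigma \ge 1 + e^{-aM}$, and the lemma follows with $C := e^{-aM}$.

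I expect step three to be the main obstacle, because the naive use of \eqref{eq:holder_obs} via matching long initial coordinate blocks is unavailable here: the sequences $\sigma^i u$ and $\sigma^i v$ share only their tails past position $M-i$, and in general disagree already at position $0$. The resolution is the slightly unusual trick of taking $n = 0$ in \eqref{eq:holder_obs}, which already encodes a \emph{uniform-in-}$\omega$ oscillation bound on each fiber. Without this observation, a fiber-by-fiber continuity argument would only yield an $\omega$-dependent constant and would fail to give the desired $\esssup$-type estimate.
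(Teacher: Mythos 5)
Your proof is correct and follows essentially the same route as the paper's: split off the diagonal preimage $[x]_0^M s$ contributing $1$, use the uniform aperiodicity \ref{g1} together with $b\ge 2$ a.s.\ to produce a second preimage, and bound its $L_M$-contribution below by a constant depending only on the H\"older data of $\psi$. Your explicit observation that the bound must come from the $n=0$ case of \eqref{eq:holder_obs} (since $\sigma^i u$ and $\sigma^i[x]_0^Ms$ generally already disagree at coordinate $0$, so no nontrivial common cylinder is available) is exactly the point the paper leaves implicit with the phrase ``Using the fact that $\psi$ satisfies \eqref{eq:holder_obs},'' and your quantitative constant $C=e^{-aM}$ is the natural one; you also correct a small notational slip in the paper's proof by writing the diagonal element as $v=[x]_0^Ms$ rather than $x$.
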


\begin{proof}[{The proof of Proposition \ref{prop:exp_decay_cylinder}}]
  It suffices to prove the claim when $n$ is a multiple of $M$, so let $n = kM$ for some $k \in \ZZ^+$.
  If $x \not\in \E_\omega$ and $\mu_{\omega}(C_{kM}(x)) = 0$ then we are done.
  Otherwise, if $x \not\in \E_\omega$ and $\mu_{\omega}(C_{kM}(x)) > 0$ then there must exist some $x' \in \E_\omega \cap C_{kM}(x)$. Since $\mu_{\omega}(C_{kM}(x)) = \mu_{\omega}(C_{kM}(x'))$, by replacing $x'$ with $x$ we may therefore assume that $x \in \E_\omega$ without loss of generality.
  By the Gibbs property and as $x \in \E_\omega$ we have
  \begin{equation*}
    \mu_{\omega}(C_{kM}(x)) = \intf_{I_{kM}(\omega, x)} F_{kM}(\omega, [x]_0^{kM}s) d\mu_{\theta^{kM}\omega}(s).
  \end{equation*}
  Recall that
  \begin{equation}\label{eq:exp_decay_cylinder_1}
    F_{kM}(\omega, [x]_0^{kM}s) = \left(\sum_{u \in \E_\omega \cap \sigma^{-kM}(s)} L_{kM}(\omega, u,[x]_0^{kM}s) \frac{H(\omega,u)}{H(\omega,[x]_0^{kM}s)}\right)^{-1}.
  \end{equation}
  Since $\ln H$ satisfies \eqref{eq:holder_obs} there exists some $C > 0$ such that
  \begin{equation}\label{eq:exp_decay_cylinder_2}
    F_{kM}(\omega, [x]_0^{kM}s) \le C \left(\sum_{u \in \E_\omega \cap \sigma^{-kM}(s)} L_{kM}(\omega, u,[x]_0^{kM}s) \right)^{-1}
  \end{equation}
  for a.e. $\omega$ and every $x \in \E_\omega$ and $s \in I_{kM}(\omega,x)$.
  In the case where $k = 1$ one obtains an upper bound for the right side of \eqref{eq:exp_decay_cylinder_2} from Lemma \ref{lemma:cylinder_less_than_1}.
  For $k > 1$ we have
  \begin{equation}\label{eq:exp_decay_cylinder_3}
    \sum_{u \in \E_\omega \cap \sigma^{-kM}(s)} L_{kM}(\omega, u, [x]_0^{kM}s) = \sum_{\substack{u_1 \in \sigma^{-(k-1)M}(s) \\ u_1 \in \E_{\theta^M \omega}}} \sum_{\substack{u \in  \sigma^{-M}(u_1) \\ u \in \E_{\omega}}} L_{kM}(\omega, [u]_0^M u_1, [x]_0^{kM}s).
  \end{equation}
  Since in \eqref{eq:exp_decay_cylinder_3} we have
  \begin{equation*}
    L_{kM}(\omega, [u]_0^M u_1, [x]_0^{kM}s) = L_{(k-1)M}(\theta^M\omega, u_1, [x]_M^{(k-1)M}s)L_{M}(\omega, u, [x]_0^{kM}s)
  \end{equation*}
  it follows from Lemma \ref{lemma:cylinder_less_than_1} that
  \begin{equation*}
    \sum_{u \in \E_\omega \cap \sigma^{-kM}(s)} L_{kM}(\omega, u, [x]_0^{kM}s) \ge (1+C)\sum_{u_1 \in \sigma^{-(k-1)M}(s) \cap \E_{\theta^M \omega}} L_{(k-1)M}(\theta^M\omega, u_1, [x]_M^{(k-1)M}s).
  \end{equation*}
  By iterating and applying Lemma \ref{lemma:cylinder_less_than_1} again one finds that
  \begin{equation}\label{eq:exp_decay_cylinder_4}
    \left(\sum_{u \in \E_\omega \cap \sigma^{-kM}(s)} L_{kM}(\omega, u, [x]_0^{kM}s)\right)^{-1} \le ((1+C)^{1/M})^{-kM}.
  \end{equation}
  Applying \eqref{eq:exp_decay_cylinder_2} and \eqref{eq:exp_decay_cylinder_4} to \eqref{eq:exp_decay_cylinder_1} one finds that
  \begin{equation*}
    \esssup_{\omega} \sup_{x \in \E_{\omega}} \mu_{\omega}(C_{kM}(x)) \le C((1+C)^{1/M})^{-kM}
  \end{equation*}
  for every $n = kM$ with $k \in \ZZ^+$. As remarked earlier this is sufficient to deduce the result for all $n$.
\end{proof}

\begin{remark}
  An alternative proof of Proposition \ref{prop:exp_decay_cylinder} is provided in \cite[Lemma 2.1]{rousseau2015hitting} (which is adapted from \cite{galves1997inequalities}) under the alternative assumptions of fiber-wise $\psi$-mixing, with summable $\psi$, and the inequality
  \begin{equation}\label{eq:small_cylinder}
      \esssup_{\omega} \sup_{x \in \E_\omega} \mu_{\omega}(C_1(x)) < 1.
  \end{equation}
\end{remark}

Since $\nu(C_n(y)) = \intf \mu_{\omega}(C_n(y)) d\mathbb{P}$, it follows that $\nu(C_n(y)) > 0$ implies that $\eps(n) > 0$.
Hence one deduces \ref{sft2} from Proposition \ref{prop:exp_decay_cylinder}.

\begin{proposition}\label{prop:cylinder_size_gibbs}
  We have \ref{sft2} with $0 < \eps(n) \le ce^{-an}$ for some $c,a > 0$ and every $n \in \ZZ^+$.
\end{proposition}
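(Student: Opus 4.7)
The proposition is essentially a direct repackaging of two facts already in place, so the plan is short. For the upper bound, I would simply invoke Proposition \ref{prop:exp_decay_cylinder}: since $A_n = C_n(y)$, we have
\begin{equation*}
  \eps(n) = \esssup_{\omega} \mu_\omega(C_n(y)) \le \esssup_\omega \sup_{x \in X} \mu_\omega(C_n(x)) \le c e^{-an}
\end{equation*}
for the constants $c,a > 0$ furnished by that proposition.

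For the lower bound I would use the remark that the author made just before stating the proposition. Under the hypotheses of Theorem~\ref{thm:gibbs} we have $\nu(C_n(y)) > 0$ for every $n \in \ZZ^+$. Disintegrating gives
\begin{equation*}
  0 < \nu(C_n(y)) = \intf_\Omega \mu_\omega(C_n(y))\, d\PP(\omega).
\end{equation*}
The integrand is non-negative, so the set $\{\omega : \mu_\omega(C_n(y)) > 0\}$ has positive $\PP$-measure. This means there is a positive essential lower bound on the set where the measure of the cylinder is nonzero, and in particular the essential supremum $\eps(n) = \esssup_\omega \mu_\omega(C_n(y))$ is strictly positive. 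Combined with the upper bound this yields $0 < \eps(n) \le c e^{-an}$, and the limit $\eps(n) \to 0$ required by \ref{sft2} follows from the exponential upper bound.

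There is no real obstacle here: the work has been done in Proposition \ref{prop:exp_decay_cylinder} (upper bound via the Gibbs property and the aperiodicity-driven Lemma \ref{lemma:cylinder_less_than_1}) and in the standing hypothesis $\nu(C_n(y)) > 0$ of Theorem \ref{thm:gibbs} (positivity). The only small care needed is to note that an integral of a nonnegative measurable function being positive forces the function to be positive on a set of positive measure, hence the essential supremum to be positive; this is immediate.
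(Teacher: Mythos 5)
Your proposal is correct and matches the paper's argument exactly: the upper bound is Proposition \ref{prop:exp_decay_cylinder} applied to $A_n = C_n(y)$, and positivity of $\eps(n)$ follows from the standing hypothesis $\nu(C_n(y)) > 0$ via the disintegration $\nu(C_n(y)) = \int_\Omega \mu_\omega(C_n(y))\,d\PP$. One tiny wording quibble: there need not be a positive essential \emph{lower} bound on the set where $\mu_\omega(C_n(y)) > 0$; what you actually need (and what your preceding sentence already gives) is just that this set has positive $\PP$-measure, which forces $\esssup_\omega \mu_\omega(C_n(y)) > 0$.
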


We now give the proof of \ref{sft1}. The proof uses the usual Gibbs property of $\mu$ on the pseudo-multiplicativity of the measure of cylinder sets, which we reproduce from \cite{boggun} in a modified form.

\begin{lemma}[{\cite[Proposition 4.7]{boggun}}]\label{lemma:distortion}
  There exists $c > 0$ such that for every $n,m \in \ZZ^+$, a.e. $\omega \in \Omega$ and every $x \in \E_\omega$ one has
  \begin{equation*}
    c^{-1} \mu_{\omega}(C_{n}(x))\mu_{\theta^n\omega}(C_{m}(\sigma^n x)) \le \mu_{\omega}(C_{n+m}(x)) \le c\mu_{\omega}(C_{n}(x))\mu_{\theta^n\omega}(C_{m}(\sigma^n x)).
  \end{equation*}
  \begin{proof}
    The result follows from the proof of \cite[Proposition 4.7]{boggun}: it is shown that there are functions $g : \E_Q \to \RR$ and $\lambda : \Omega \to (0, \infty)$ and a constant $c$ such that if $\Psi = \psi + \ln g - \ln (g \circ S_Q) - \lambda$ then for every $n,m \in \ZZ^+$, a.e. $\omega \in \Omega$ and each $x \in \E_\omega$ one has
    \begin{equation}\label{eq:distortion_1}
      c^{-1} \le \frac{\mu_{\omega}(C_{n+m}(x))}{\exp(\sum_{i=0}^{n+m-1}\Psi(\theta^i \omega, \sigma^i x)) } \le c.
    \end{equation}
    Since the denominator of \eqref{eq:distortion_1} is multiplicative, the claimed inequalities readily follows.
  \end{proof}
\end{lemma}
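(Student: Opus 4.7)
The plan is to obtain pseudo-multiplicativity from the Gibbs property by way of a normalized potential, in the spirit of classical thermodynamic formalism transplanted to the random setting. The starting point is the representation
\begin{equation*}
  \mu_\omega(C_{n+m}(x)) = \intf_{I_{n+m}(\omega,x)} F_{n+m}(\omega, [x]_0^{n+m}s)\, d\mu_{\theta^{n+m}\omega}(s),
\end{equation*}
combined with the analogous representation for $\mu_\omega(C_n(x))$ and $\mu_{\theta^n\omega}(C_m(\sigma^n x))$. A direct comparison of integrands is messy because $F_n$ is a ratio over all preimages under $\sigma^n$, but the key observation is that the combinatorial ``preimage sum'' in the denominator of $F_n$ can be factored across the time splitting $n+m = n + m$.

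First I would produce a normalized potential. Standard random Ruelle--Perron--Frobenius theory on $\E_Q$ (which underlies the construction of $\nu$ in [boggun]) supplies a measurable positive eigenfunction $g : \E_Q \to (0,\infty)$ with $\ln g \in \mathbb{F}_Q$, together with a measurable log-eigenvalue $\lambda : \Omega \to \RR$, so that the cohomologous potential $\Psi := \psi + \ln g - \ln(g \circ S_Q) - \lambda$ is normalized in the sense that its Birkhoff sums are the logarithmic densities of $\mu_\omega$ on cylinders up to a bounded multiplicative error. Specifically, unraveling the formula for $F_n$ and using that $H$ and $g$ are both log-Hölder in $\mathbb{F}_Q$ (hence contribute only a bounded distortion along each cylinder), I would establish
\begin{equation*}
  c^{-1} \le \frac{\mu_\omega(C_n(x))}{\exp\bigl(\sum_{i=0}^{n-1} \Psi(\theta^i\omega, \sigma^i x)\bigr)} \le c
\end{equation*}
uniformly in $n$, $\omega$, and $x \in \E_\omega$, for some constant $c>0$.

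Once this Gibbs inequality is in hand, the lemma is immediate: the Birkhoff sum additivity
\begin{equation*}
  \sum_{i=0}^{n+m-1}\Psi(\theta^i\omega,\sigma^i x) = \sum_{i=0}^{n-1}\Psi(\theta^i\omega,\sigma^i x) + \sum_{i=0}^{m-1}\Psi(\theta^{n+i}\omega,\sigma^i(\sigma^n x))
\end{equation*}
exponentiates to an exact multiplicative identity, and then applying the two-sided Gibbs bound to each of the three cylinders absorbs the distortion into a single constant (a power of $c$), yielding the claimed inequalities.

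The main obstacle I anticipate is the first step, namely establishing the uniform Gibbs inequality in the random setting. In the deterministic case this is classical, but here $g$ is only measurable in $\omega$ and a priori one only has integrability, so the constant $c$ must be made independent of $\omega$. The Hölder regularity $\ln g, \ln H \in \mathbb{F}_Q$ (condition \eqref{eq:holder_obs}) is precisely what controls the distortion along any cylinder of any length, so this should go through, but verifying the uniformity carefully is the technical heart of the argument. Since this exact statement is essentially [boggun, Proposition 4.7], I would invoke that result directly and only extract from its proof the cohomological data $g$ and $\lambda$ needed to write down $\Psi$.
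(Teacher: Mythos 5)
Your proposal is correct and matches the paper's proof essentially exactly: both reduce to the uniform Gibbs inequality with the cohomologically normalized potential $\Psi = \psi + \ln g - \ln(g\circ S_Q) - \lambda$ supplied by the random RPF theory in \cite[Proposition 4.7]{boggun}, and both then conclude by the multiplicativity of the Birkhoff sum in the denominator. Your added discussion of where $g$, $\lambda$ come from and why the H\"older class $\mathbb{F}_Q$ makes the constant $c$ uniform in $\omega$ is more explicit than the paper's terse citation, but it is the same argument.
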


We obtain \ref{sft1} as a consequence of Proposition \ref{prop:exp_decay_cylinder} and Lemma \ref{lemma:distortion}.
\begin{proposition}\label{prop:small_short_returns}
  There exists $c,a > 0$ such that \ref{sft1} holds with $\beta_1(n) = c\exp(-an) = \beta_2(n)$ i.e. for every $j,n \in \NN$ and $\mathbb{P}$-a.s. one has
  \begin{equation}\label{eq:small_short_returns_0}
    \mu_\omega(C_{n}(y)\cap\sigma^{-j}C_{n}(y)) \le c\mu_\omega(C_{n}(y)) \exp(-a\min\{j,n\}).
  \end{equation}
  \begin{proof}
    First suppose that $j \le n$. If $\mu_\omega(C_{n}(y)\cap\sigma^{-j}C_{n}(y)) = 0$ then we are done. On the other hand, if $\mu_\omega(C_{n}(y)\cap\sigma^{-j}C_{n}(y)) > 0$ then there exists some $y' \in \E_\omega \cap C_{n+j}(y)$.
    Hence by Proposition \ref{prop:exp_decay_cylinder} and Lemma \ref{lemma:distortion} we get
    \begin{equation*}\begin{split}
      \mu_\omega(C_{n}(y)\cap\sigma^{-j}C_{n}(y)) &= \mu_\omega(C_{n+j}(y')) \\
      &\le c\mu_\omega(C_{n}(y'))\mu_{\theta^n\omega}(C_{j}(\sigma^n y')) \\
      &\le c\mu_\omega(C_{n}(y))\mu_{\theta^n\omega}(C_{j}(\sigma^n y)) \le c \mu_\omega(C_{n}(y))\exp(-aj).
    \end{split}\end{equation*}
    Thus \eqref{eq:small_short_returns_0} holds when $j \le n$.
    Now suppose that $j > n$.
    Define an equivalence relation $\sim$ on $C_{n}(y)\cap\sigma^{-j}(C_{n}(y)) \cap \E_\omega$ such that $x \sim x'$ iff $x_i = x_i'$ for every $i \in \{n, \dots, j-1\}$.
    Let $P$ denote the set of equivalence classes of $\sim$, and for each $p \in P$ let $p_x$ denote a fixed, representative element of $p$.
    We have
    \begin{equation*}
      \mu_\omega(C_{n}(y)\cap\sigma^{-j}(C_{n}(y))) = \sum_{ p \in P} \mu_\omega(C_{n}(y)\cap \sigma^{-n}(C_{n-j}(p_x))\cap \sigma^{-j}(C_{n}(y))).
    \end{equation*}
    Since $C_{n}(y)\cap \sigma^{-n}(C_{n-j}(p_x))\cap \sigma^{-j}(C_{n}(y)) = C_{n+j}(p_x)$ and $p_x \in \E_\omega$, by applying Lemma \ref{lemma:distortion} twice we get
    \begin{equation}\label{eq:small_short_returns_1}
        \mu_\omega(C_{n}(y)\cap \sigma^{-n}C_{n-j}(p_x)\cap \sigma^{-j}C_{n}(y)) \le c^2 \mu_\omega(C_{n}(y)) \mu_{\theta^j\omega}(C_{n}(y)) \mu_{\theta^n\omega}(C_{n-j}(p_x))
    \end{equation}
    Since the sets $\{C_{n-j}(p_x)\}_{p \in P}$ are pairwise disjoint, by summing \eqref{eq:small_short_returns_1} over $p$ and applying Proposition \ref{prop:exp_decay_cylinder} one gets
    \begin{equation*}\begin{split}
      \mu_\omega(C_{n}(y)\cap\sigma^{-j}(C_{n}(y))) &\le c^2 \mu_\omega(C_{n}(y)) \mu_{\theta^j\omega}(C_{n}(y)) \sum_{p \in P} \mu_{\theta^n\omega}(C_{n-j}(p_x)) \\
      &\le c^3 \mu_\omega(C_{n}(y)) \exp(-an),
    \end{split}\end{equation*}
    which is \eqref{eq:small_short_returns_0} in the case where $j > n$.
  \end{proof}
\end{proposition}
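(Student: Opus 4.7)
The plan is to split into the cases $j \le n$ and $j > n$, and to combine two ingredients already established: the uniform exponential decay of cylinder measures (Proposition~\ref{prop:exp_decay_cylinder}) and the pseudo-multiplicativity of the Gibbs measures on cylinders (Lemma~\ref{lemma:distortion}). The former will supply the exponential factor $e^{-a\min\{j,n\}}$ and the latter will let us re-factor long cylinders as products of shorter cylinders on the appropriate fibres.

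For $j \le n$, I would first argue that there is nothing to prove unless the intersection has positive measure, in which case it contains some $y' \in \E_\omega$. Because $y'$ agrees with $y$ on coordinates $0,\dots,n-1$ and $\sigma^j y'$ agrees with $y$ on coordinates $0,\dots,n-1$, and $j \le n$, the first $n+j$ coordinates of $y'$ are determined and the intersection equals the single cylinder $C_{n+j}(y')$. Lemma~\ref{lemma:distortion} applied at position $n$ then bounds $\mu_\omega(C_{n+j}(y'))$ by $c\,\mu_\omega(C_n(y'))\,\mu_{\theta^n\omega}(C_j(\sigma^n y'))$, and the first factor equals $\mu_\omega(C_n(y))$ while the second is bounded by $ce^{-aj}$ via Proposition~\ref{prop:exp_decay_cylinder}.

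For $j > n$ the intersection is larger, since the coordinates $n,\dots,j-1$ are free modulo admissibility. I would partition $C_n(y) \cap \sigma^{-j}C_n(y) \cap \E_\omega$ according to the middle block on positions $n,\dots,j-1$, obtaining a disjoint union indexed by a finite set $P$ of admissible middle strings; each piece is a single $(n+j)$-cylinder determined by $y$ on both ends and a choice of $p \in P$ in the middle. Apply Lemma~\ref{lemma:distortion} twice — first to peel off the initial block of length $n$, then to peel off the terminal block of length $n$ from the remaining $(j-n)+n$ cylinder on the shifted fibre $\theta^n\omega$. Up to an absolute constant this factors each piece as $\mu_\omega(C_n(y))$ times a middle measure on $\E_{\theta^n\omega}$ times $\mu_{\theta^j\omega}(C_n(y))$. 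Summing over $p \in P$, the disjoint middle cylinders contribute at most $1$, and the last factor is bounded by $ce^{-an}$ by Proposition~\ref{prop:exp_decay_cylinder}.

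The main obstacle is the bookkeeping in the second case: identifying the correct disjoint decomposition and applying Lemma~\ref{lemma:distortion} with the shifted base point on the right fibre. A minor technical wrinkle is that the reference point $y$ need not lie in $\E_\omega$; when the measure is positive this is harmless, since any admissible representative of the relevant cylinder has the same measure and may be substituted for $y$ without changing either side of the inequality.
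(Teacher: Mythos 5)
Your proposal is correct and follows essentially the same route as the paper: in both cases the intersection is decomposed into $(n+j)$-cylinders, Lemma~\ref{lemma:distortion} is used to factor each such cylinder across the fibres, and Proposition~\ref{prop:exp_decay_cylinder} supplies the exponential factor; for $j>n$ the paper likewise partitions by the middle block on coordinates $n,\dots,j-1$, applies the distortion lemma twice, and observes that the disjoint middle cylinders sum to at most one. You also correctly anticipate the small technical point of replacing $y$ by an admissible representative $y'\in\E_\omega$ when the measure is positive.
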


\begin{proof}[{The proof of Theorem \ref{thm:gibbs}}]
   As discussed at beginning of this section, by Theorem \ref{thm:main1} it suffices to exhibit \ref{sft1}, \ref{sft2}, and \ref{sft3}, since \ref{sft4} follows immediately as a result.
   These hypotheses are verified in Propositions \ref{prop:small_short_returns}, \ref{prop:cylinder_size_gibbs} and \ref{prop:quenched_decay}, respectively.
\end{proof}

We finish this section with the proof of Lemma \ref{lemma:pos_measure}.

\begin{proof}[{The proof of Lemma \ref{lemma:pos_measure}}]
  Since $\mathbb{P}\{ \omega : y \in \E_\omega\} > 0$ there exists a set $\Omega_y \subseteq \Omega$ of non-zero $\mathbb{P}$-measure such that $y \in \E_\omega$ for every $\omega \in \Omega_y$. Notice that for each $n \in \ZZ^+$ one has
  \begin{equation}\label{eq:cylinder_lower_bound_0}
    \nu(C_n(y)) \ge \intf_{\Omega_y}  \mu_{\omega}(C_n(y)) d\mathbb{P}(\omega).
  \end{equation}
  By the Gibbs property we have
  \begin{equation}\label{eq:cylinder_lower_bound_1}
    \mu_\omega(C_n(y)) = \intf \sum_{\substack{z \in C_n(y) \cap \E_\omega \\ \sigma^{n+M}(z) = s}} F_{n+M}(\omega, z) d\mu_{\theta^{n+M}\omega}(s).
  \end{equation}
  By the aperiodicity assumption \ref{g1}, for every $s \in \E_{\theta^{n+M}\omega}$ there exists $z \in \E_\omega$ such that $z \in C_n(y)$ and $\sigma^{n+M}(z) = s$.
  Hence the support of the integrand in \eqref{eq:cylinder_lower_bound_1} is all of $\E_{\theta^{n+M}\omega}$.
  Since for $z \in C_n(y) \cap \sigma^{-(n+M)}(s) \cap \E_{\omega}$ one has
  \begin{equation*}
    F_{n+M}(\omega, z) = \left(\sum_{u \in \E_\omega \cap \sigma^{-(n+M)}(z)} L_{n+M}(\omega, u,z) \frac{H(\omega,u)}{H(\omega,z)}\right)^{-1}.
  \end{equation*}
  As $\ln H$ and $\psi$ satisfy \eqref{eq:holder_obs} for each $n$ there exists $c_n > 0$ such that
  \begin{equation}\label{eq:cylinder_lower_bound_2}
    F_{n+M}(\omega, z) \ge c_n\left(\sum_{u \in \E_\omega \cap \sigma^{-(n+M)}(z)} 1 \right)^{-1}.
  \end{equation}
  If $u \in \E_\omega \cap \sigma^{-(n+M)}(z)$ then for each $i \in \{0, \dots, n + M-1\}$ there are at most $b(\theta^i \omega)$ choices for the value of $u_i$, and so
  \begin{equation}\label{eq:cylinder_lower_bound_3}
    \abs{\E_\omega \cap \sigma^{-(n+M)}(z)} \le \prod_{i=0}^{n+M-1} b(\theta^i \omega).
  \end{equation}
  It follows from \eqref{eq:cylinder_lower_bound_2}, \eqref{eq:cylinder_lower_bound_3} and the fact that $C_n(y) \cap \sigma^{-(n+M)}(s) \cap \E_{\omega}$ is non-empty that for every $s \in \E_{\theta^{n+M} \omega}$ we have
  \begin{equation}
     \sum_{\substack{z \in C_n(y) \\ \sigma^{n+M}(z) = s}} F_{n+M}(\omega, z) \ge c_n \prod_{i=0}^{n+M-1} b(\theta^i \omega)^{-1}.
  \end{equation}
  Hence by \eqref{eq:cylinder_lower_bound_0} and \eqref{eq:cylinder_lower_bound_1} we have the lower bound
  \begin{equation*}
    \nu(C_n(y)) \ge c_n \intf_{\Omega_y} \prod_{i=0}^{n+M-1} b(\theta^i \omega)^{-1} d\mathbb{P}(\omega).
  \end{equation*}
  Since $\ln b \in L^1 (\mathbb{P})$ we must have $\intf_{\Omega_y} \prod_{i=0}^{n+M-1} b(\theta^i \omega)^{-1} d\mathbb{P}(\omega) > 0$ for every $n$, and so the required claim follows.
\end{proof}

\section{Examples}\label{sec:examples}

Let $(\Omega, \mathcal{A}, \mathbb{P})$ denote $\RR / \ZZ$ equipped with Lebesgue measure $\Leb$. We identity $\RR / \ZZ$ with $[0,1)$ in the usual way. For $r \in (0,1)$ let $\theta_r$ denote the irrational rotation $x \mapsto x + r$.
Recall that $\theta_r$ is $\Leb$-ergodic but not $\Leb$-mixing.
We will construct a family of random subshifts of finite type that cover $\theta_r$ and to which we may apply Theorem \ref{thm:gibbs}.
%For simplicity's sake we assume that the number of symbols $b$ is  a.s. finite.
The main technical requirement is to exhibit measurable transition matrices $\{ Q(\omega) : \RR^{b(\theta_r \omega)} \to \RR^{b(\omega)} \}_{\omega \in [0,1)}$ that satisfies the uniform aperiodicity condition \ref{g2}. The following proposition provides some simple conditions that guarantee \ref{g2}.

\begin{proposition}\label{prop:aperiodicity}
  Let $r \in (0,1) \setminus \mathbb{Q}$ and suppose that $\{ Q(\omega) : \RR^{b(\theta_r \omega)} \to \RR^{b(\omega)} \}_{\omega \in [0,1)}$ is a random transition matrix over $\theta_r$.
  If there exists an open interval $I \subseteq [0,1)$ such that every entry in $Q(\omega)$ is 1 for every $\omega \in I$ then \ref{g2} is satisfied.
  \begin{proof}
    Since $r$ is irrational, $\theta_r$ is minimal and so there exists some $M$ so that $[0,1) = \bigcup_{k=0}^M \theta_r^{-k}(I)$;
    we claim that \ref{g2} is satisfied with this $M$.
    In particular, for any $\omega \in [0,1)$ there is at least one $k \in \{0, \dots, M-1 \}$ such that $\theta_r^k \omega \in I$. Hence at least one matrix in the product $Q(\omega) \cdots Q(\theta_r^{M-1}\omega)$ has no 0 entries.
    Since each matrix $Q(\theta_r^k \omega)$, $k \in \{0, \dots, M-1 \}$, has at least one 1 in every row and column, it follows that the product $Q(\omega) \cdots Q(\theta_r^{M-1}\omega)$ has no zero entries, which is \ref{g2}.
  \end{proof}
\end{proposition}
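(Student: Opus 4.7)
The plan is to leverage the minimality of the irrational rotation $\theta_r$: since any forward orbit must enter $I$ in uniformly bounded time, the matrix product $Q(\omega)Q(\theta_r\omega)\cdots Q(\theta_r^{M-1}\omega)$ will contain at least one all-ones factor, which should force every entry of the product to be nonzero and hence yield the aperiodicity condition.

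First, I would extract a uniform hitting time. Since $r\notin\mathbb{Q}$, the rotation $\theta_r$ acts minimally on the compact circle $\RR/\ZZ$, so $\{\theta_r^{-k}(I)\}_{k\ge 0}$ is an open cover, and compactness yields an $M\ge 1$ with $\bigcup_{k=0}^{M-1}\theta_r^{-k}(I)=\RR/\ZZ$. Consequently, for every $\omega\in[0,1)$ there exists some $k(\omega)\in\{0,\ldots,M-1\}$ with $\theta_r^{k(\omega)}\omega\in I$, and so $Q(\theta_r^{k(\omega)}\omega)$ is the all-ones matrix by hypothesis. Next, I would verify that this $M$ witnesses aperiodicity by showing that every entry of the $b(\omega)\times b(\theta_r^M\omega)$ product $Q(\omega)Q(\theta_r\omega)\cdots Q(\theta_r^{M-1}\omega)$ is strictly positive. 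Writing $k=k(\omega)$, each $(i,j)$ entry equals a sum, over admissible paths $i=i_0,i_1,\ldots,i_M=j$, of the quantity $\prod_{\ell=0}^{M-1}Q(\theta_r^\ell\omega)_{i_\ell,i_{\ell+1}}$, so it suffices to exhibit a single such path in which every factor equals $1$. I would build the path in three phases: propagate forward $i_0\to i_1\to\cdots\to i_k$ using the hypothesis that each row of $Q(\theta_r^\ell\omega)$ has at least one nonzero entry for $\ell<k$; work backward from $j=i_M$ to $i_{k+1}$ using the analogous column condition for $\ell>k$; and note that the middle transition $(i_k,i_{k+1})$ is automatic because $Q(\theta_r^k\omega)$ is all ones.

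There is no serious obstacle here: the main (and essentially only) subtle point is the passage from the pointwise statement ``each orbit eventually enters $I$'' to the uniform statement ``each orbit enters $I$ within $M$ steps'', which relies on the compactness of the circle together with the openness of $I$. Everything else is a short combinatorial unpacking of matrix multiplication together with the row/column nondegeneracy that is already built into the definition of a random transition matrix.
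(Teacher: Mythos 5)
Your proof is correct and follows essentially the same route as the paper: minimality of the irrational rotation plus compactness gives a uniform hitting time $M$, and the product then has no zero entries because one factor is all ones while every other factor has a one in each row and column. Your version simply unpacks the final matrix-multiplication step a bit more explicitly via the path decomposition, and is slightly more careful with the off-by-one indexing in the choice of $M$, but the argument is the same.
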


\begin{remark}
  It is evident that Proposition \ref{prop:aperiodicity} generalises to the case where $\theta$ is a minimal homeomorphism on a compact metric space and $I$ is any open set.
\end{remark}

\begin{example}
  Let $I = [0,3/4)$ and $J= [0, 1/4) \cup [1/2,1)$. Set
  \begin{equation*}
    Q(\omega) =
    \begin{pmatrix}
      1 & \chi_I(\omega) & \chi_I(\omega)\chi_J(\omega) \\
      \chi_I(\omega) & 1 & \chi_J(\omega) \\
      \chi_I(\omega)\chi_J(\omega) & \chi_J(\omega) & 1 \\
    \end{pmatrix}.
  \end{equation*}
  If $r \in (0,1) \setminus \mathbb{Q}$ then Proposition \ref{prop:aperiodicity} applies to the random transition matrix $Q$, and so we deduce \ref{g2}.
  Let $S_Q : \E_Q \to \E_Q$ denote the random subshift of finite type induced by $Q$ and fix some $\psi \in \mathbb{F}_Q$ satisfying \ref{g1} (notice that since $b= 3$ a.s. it suffices to have $\esssup_{\omega} \sup_{y \in \E_\omega} \psi(\omega, y) < \infty$).
  Denote by $\nu_\psi$ the unique $S_A$-invariant random Gibbs probability measure for $\psi$ that is produced by Theorem \ref{thm:gibbs_existence}, and let $\{\mu_\omega\}_{\omega \in [0,1)}$ denotes the disintegration of $\nu_\psi$.
  Thus Theorem \ref{thm:gibbs} applies to the cylinders $\{C_n(y)\}_{n \in \ZZ^+}$ about any non-periodic $y \in X$ satisfying $\nu_\psi(C_n(y)) > 0$ for each $n \in \ZZ^+$: for $\Leb$ a.e. $\omega$ we have convergence of
  \begin{equation*}
    \N_{C_n(y)}(\omega,x) := \sum_{k=1}^\infty  1_{C_n(y)}(\sigma^k(x)) \delta_{T_\omega^k(C_n(y))}
  \end{equation*}
  to a uniform Poisson Point Process on $[0,\infty)$ under the law of $\mu_\omega$.
  In addition, we have $\Leb$-a.s. convergence in distribution of the first hitting time:
  \begin{equation*}
	   \lim_{n\to\infty}\sup_{k\ge0}\abs{\mu_\omega\left(x\in X\colon R_n(x,y)>k\right)- \exp\left(-\sum_{i=1}^k\mu_{\theta_r^i\omega} (C_n(y))\right)}=0.
  \end{equation*}

   Actually, in this setting the requirement that $\nu_\psi(C_n(y)) > 0$ for every $n \in \ZZ^+$ follows from the existence of some $\omega' \in [0,1)$ such that $y \in \E_\omega$.
   To see why, notice that if $y \in \E_\omega$ then for any $n \in \ZZ^+$ there exists $\omega_n > \omega'$ such that for any $z \in [\omega', \omega_n)$ and $k \in \{0, \dots, n\}$ the points $\theta_r^k(z)$ and $\theta_r^k(\omega')$ are in the same element of the partition $\{[0,1/4), [1/4,1/2), [1/2,3/4), [3/4,1)\}$.
   Hence $Q, \cdots, Q \circ \theta_r^n$ are constant on $[\omega', \omega_n)$ and so by the definition of $\E_Q$ we have $C_n(y) \cap \E_{z} \ne \emptyset$ for each $z \in [\omega', \omega_n)$.
   Since $b = 3$ a.s. one may argue as in the proof of Lemma \ref{lemma:pos_measure} to deduce that $\essinf_{\omega \in \Omega} \inf_{x \in \E_\omega} \mu_{\omega}(C_n(x)) > 0$ for every $n \in \ZZ^+$. Hence for each $n \in \ZZ^+$ we have
   \begin{equation}\begin{split}\label{eq:example_pos}
     \nu_\psi(C_n(y)) &\ge \intf_{[\omega', \omega_n)} \mu_\omega(C_n(y)) d\Leb(\omega) \\
     &> (\omega_n - \omega') \left(\essinf_{\omega \in \Omega} \inf_{x \in \E_\omega} \mu_{\omega}(C_n(x))\right) > 0.
   \end{split}\end{equation}

   Lastly we comment on the fact that while the marginal measure $\mu = \intf \mu_\omega d\Leb(\omega)$ is invariant under the full-shift on 3 symbols, which we will also denote by $\sigma$, $\mu$ is not mixing for $\sigma$.
   Firstly we point out that $S_Q$ possesses a non-trivial symmetry.
   Specifically, for $(\omega,x) \in \E_Q$ set $\omega' = \omega + 1/2$ and let $x'$ denote the sequence obtained by swapping every $1$ with a $3$ in $x$, and vice versa. If the map $(\omega, x) \mapsto (\omega', x')$ is denoted by $\tau$ then one may verify that $\tau$ is a bijective involution (i.e. $\tau = \tau^{-1}$), and that $\tau$ and $S_Q$ commute.
   It follows that $\nu_\psi \circ \tau$ is a $S_Q$-invariant probability measure.
   In addition, if the map $x \mapsto x'$ is denoted by $U : X \to X$ then one may verify that the disintegration of $\nu_\psi \circ \tau$ is given by $\{ \mu_{\omega+1/2} \circ U^{-1} \}_{\omega \in [0,1)}$.
   Since $\nu_\psi$ is a random $S_Q$-invariant Gibbs measure for $\psi$, by checking the definition (Definition \ref{def:gibbs}) it is clear that $\nu_\psi \circ \tau$ is a random $S_Q$-invariant Gibbs measure for the potential $\psi$ too, and so by uniqueness (Theorem \ref{thm:gibbs_existence}) we must have that $\nu_\psi = \nu_\psi \circ \tau$.
   It follows that for every measurable $A \subseteq \E_\omega$ we have
   \begin{equation*}
     \mu_{\omega}(A) = \mu_{\omega+1/2}(U(A)),
   \end{equation*}
   and so for each $n \in \ZZ^+$ and $(\omega,x) \in \E_A$ we have
   \begin{equation}\label{eq:identity}
     \mu_{\omega}(C_n(x)) = \mu_{\omega+1/2}(C_n(x')).
   \end{equation}
   Now fix $\omega' \in [1/4,1/2)$ and notice that by the definition of $Q(\omega')$ there exists some $x \in \E_{\omega'}$ with $x_0 = 0$ and $x_1 = 1$.
   Moreover $C_2(x) \cap \E_{\omega}$ is non-empty when $\omega \in I$, and empty when $\omega \in I^c$.
   Hence by the arguments used to prove \eqref{eq:example_pos} we deduce that the support of $\omega \mapsto \mu_{\omega}(C_2(x))$ is $I$, and as $x \in \E_\omega$ that for every $n \in \ZZ^+$ with $n \ge 2$ the support of $\omega \mapsto \mu_{\omega}(C_n(x))$ has non-zero measure.
   Now let $(\omega',x') = \tau(\omega,x)$ and take $k \ge n$.
   We have
   \begin{equation}\label{eq:marginal_doc}
     \mu(C_n(x) \cap \sigma^{-k} C_n(x')) = \intf \mu_{\omega}(C_n(x)) \mu_{\theta^k_r \omega}(C_n(x')) d\Leb(\omega) + O(\rho^{k-n}).
   \end{equation}
   If $\mu$ was mixing for $\sigma$, then \eqref{eq:marginal_doc} would imply that
   \begin{equation}\label{eq:marginal_doc_2}
     \lim_{k \to \infty} \intf \mu_{\omega}(C_n(x)) \mu_{\theta^k_r \omega}(C_n(x')) d\Leb(\omega) = \mu(C_n(x))\mu(C_n(x')).
   \end{equation}
   We will show that \eqref{eq:marginal_doc_2} is false, which implies that $\mu$ is not mixing.
   Firstly, by \eqref{eq:identity} we have that $\mu(C_n(x)) = \mu(C_n(x'))$.
   Secondly, also by \eqref{eq:identity} we have that
   \begin{equation*}
     \intf \mu_{\omega}(C_n(x)) \mu_{\theta^k_r \omega}(C_n(x')) d\Leb(\omega) = \intf \mu_{\omega}(C_n(x)) \mu_{\omega + 1/2+ kr}(C_n(x)) d\Leb(\omega).
   \end{equation*}
   Since $\theta_r$ is minimal we may find an increasing sequence $\{k_i\}_{i\in \ZZ^+} \subseteq \ZZ^+$ such that $k_ir\to 1/2$.
   Since translation is $L^2$-continuous we have
   \begin{equation*}
     \lim_{i \to \infty} \intf \mu_{\omega}(C_n(x)) \mu_{ \omega+k_ir + 1/2}(C_n(x)) d\Leb(\omega) = \intf \mu_{\omega}(C_n(x))^2 d\Leb(\omega).
   \end{equation*}
   But by Jensen's inequality and as $\omega \mapsto \mu_{\omega}(C_n(x))$ is not a.s. constant (it does not have full support) we have
   \begin{equation*}
     \intf \mu_{\omega}(C_n(x))^2 d\Leb(\omega) > \mu(C_n(x))^2.
   \end{equation*}
   Hence
   \begin{equation*}
     \limsup_{k \to \infty} \intf \mu_{\omega}(C_n(x)) \mu_{\theta^k_r \omega}(C_n(x')) d\Leb(\omega) > \mu(C_n(x))\mu(C_n(x')),
   \end{equation*}
   and therefore $\mu$ is not mixing.
\end{example}

\section*{Acknowledgements}

H.C. is supported by an Australian Government Research Training Program Scholarship and the School of Mathematics and Statistics, UNSW. He would like to thank UNSW for providing travel funding under the Postgraduate Research Student Support (PRSS) Scheme, which allowed him to visit Beno{\^i}t at the Universit{\'e} de Bretagne Occidentale.

%%%%%%%%%%%%%%%%%%%%%%%%


\begin{thebibliography}{99}

\bibitem{AG}
\newblock M. Abadi and A. Galves,
\newblock \emph{Inequalities for the ocurrence times of rare events in mixing processes. The state of the art},
\newblock Markov Process. Related Fields, \textbf{7} (2001),97--112.


\bibitem{AFV}
Hale Ayta\c c, Jorge Milhazes Freitas and Sandro Vaienti
\newblock \emph{Laws of rare events for deterministic and random dynamical systems}
\newblock Trans. Amer. Math. Soc.~\textbf{367}-11 (2015) 8229--8278.

\bibitem{boggun}
T. Bogensch\"{u}tz and V.M. Gundlach, \emph{Ruelles' transfer operator for random subshifts of finite type},
 Erg. Th. Dynam. Sys.~\textbf{15} (1995) 413--447.

\bibitem{C}
Z. Coelho,
\newblock \emph{Asymptotic laws for symbolic dynamical systems},
\newblock in Topics in symbolic dynamics and applications (Temuco, 1997), vol. 279 of London Math. Soc. Lecture Note Ser., Cambridge Univ. Press, Cambridge, 2000, 123--165.

\bibitem{Co}
P. Collet,
\newblock \emph{Some ergodic properties of maps of the interval},
\newblock in Dynamical systems (Temuco, 1991/1992), vol. 52 of Travaux en Cours, Hermann, Paris, 1996, 55-91.

\bibitem{folland2013real}
G.B. Folland,
\newblock \emph{Real Analysis: Modern Techniques and Their Applications},
\newblock Wiley, 1999.

\bibitem{FFMV}
  A.C.M Freitas, J.M. Freitas, M. Magalh{\~a}es and S. Vaienti,
  \newblock \emph{Point processes of non stationary sequences generated by sequential and random dynamical systems},
  \newblock J. Stat. Phys., (2020), 1--45.

\bibitem{FFT1}
A.C.M. Freitas, J.M. Freitas and M. Todd,
\newblock \emph{Hitting Time Statistics and Extreme Value Theory},
\newblock  Probab. Theory Related Fields, \textbf{147} (2010), no. 3, 675--710.

\bibitem{FFT2}
A.C.M. Freitas, J.M. Freitas and M. Todd,
\newblock \emph{Extreme Value Laws in Dynamical Systems for Non-smooth Observations},
\newblock J. Stat. Phys., \textbf{142} (2011), no. 1, 108-126.

\bibitem{FFV}
A.C.M. Freitas, J.M. Freitas, and S. Vaienti
\newblock \emph{Extreme value laws for non stationary processes generated by sequential and random dynamical systems},
\newblock Ann. Inst. Henri Poincar{\'e} Probab. Stat., \textbf{53} (2017), no. 3, 1341–-1370.

\bibitem{galves1997inequalities}
  A. Galves and B. Schmitt,
  \newblock \emph{Inequalities for hitting times in mixing dynamical systems},
  \newblock Random and Computational Dynamics., \textbf{5} (1997), no. 4, 337-348.

\bibitem{haydn2016return}
  N. Haydn and M. Todd,
  \newblock \emph{Return times at periodic points in random dynamics},
  \newblock Nonlinearity, \textbf{30} (2017), no.1, 73–89

\bibitem{bookEVT}
V. Lucarini, D. Faranda, A.C. Moreira Freitas, J. M. Freitas, T. Kuna, M. Holland, M. Nicol, M. Todd ans S. Vaienti,
\newblock \emph{Extremes and rencurrence in dynamical systems}.
\newblock Pure and applied mathematics (Hoboken). John Wiley \& Sons, Inc., Hoboken, NJ, 2016. xi+295pp.

\bibitem{morris2006topics}
{\sc I.~Morris}, {\em Topics in Thermodynamic Formalism: Random Equilibrium States and Ergodic Optimisation}, PhD thesis, 2006.

\bibitem{P}
B. Pitskel,
\newblock \emph{Poisson limit law for Markov chains},
\newblock Ergodic Theory Dynam. Systems, \textbf{11} (1991), 501-513.

\bibitem{resnick}
S.-I. Resnick,
\newblock \emph{Extreme values, regular variation and point processes},
\newblock Springer, 2013.

\bibitem{rousseau2015hitting}
  \newblock J. Rousseau and M. Todd,
  \newblock \emph{Hitting times and periodicity in random dynamics},
  \newblock J. Stat. Phys. \textbf{161} (2015), no. 1, 131-150.

\bibitem{RSV}
\newblock J. Rousseau, P. Varandas, B. Saussol,
\newblock \emph{Exponential law for random subshift of finite type}
\newblock Stochastic processes and their applications, \textbf{124} (2014) 3260--3276.

\end{thebibliography}
\end{document}